\documentclass[11pt,a4paper,reqno]{amsart}
\usepackage[T1]{fontenc}
\usepackage{lmodern}
\usepackage[margin= 1.24in]{geometry}
\usepackage{amsmath,amssymb,amsthm}
\usepackage{microtype}
\usepackage[colorlinks=true,linkcolor=blue,citecolor=blue,urlcolor=blue]{hyperref}
\usepackage[nameinlink,noabbrev]{cleveref}
\numberwithin{equation}{section}
\newtheorem{maintheorem}{Theorem}

\newtheorem{theorem}{Theorem}[section]
\newtheorem{proposition}[theorem]{Proposition}
\newtheorem{lemma}[theorem]{Lemma}
\newtheorem{corollary}[theorem]{Corollary}
\theoremstyle{remark}
\newtheorem{example}[theorem]{Example}
\newcommand{\R}{\mathbb R}
\newcommand{\RP}{\mathbb{RP}}
\newcommand{\C}{\mathcal C}
\newcommand{\dd}{\mathrm d}
\newcommand{\SL}{\mathrm{SL}}
\newcommand{\GL}{\mathrm{GL}}
\newcommand{\vol}{\operatorname{vol}}
\newcommand{\bdel}{\bar\partial}
\newcommand{\bdelt}{\bar\delta}
\newcommand{\bBox}{\overline\Box}
\newcommand{\cQ}{\mathcal Q}
\newcommand{\id}{\operatorname{Id}}
\newcommand{\grad}{\operatorname{grad}}
\newcommand{\rank}{\operatorname{rank}}
\title[Euler characteristic of convex projective $4$-manifolds]{Euler characteristic and flat cohomology\\of convex projective $4$-manifolds}
\author{Andrea Tamburelli}
\date{18 October 2026}

\begin{document}
\maketitle

\begin{abstract}
Let $M=\Omega/\Gamma$ be a closed properly convex real projective $4$-manifold.
We prove the Chern-Hopf-Thurston conjecture for this class of manifolds, i.e. $\chi(M)\geq0$. 
The argument transfers Akagawa's double-form
identity on the Cheng--Yau cone over $\Omega$ to a Bochner identity on $M$, from which we deduce that
$H^q(M;\Lambda^pE^*)=0$ whenever
$q<p(n+1-p)/(n+1)$, in arbitrary dimension $n$, for some flat bundle $E$ naturally attached to $M$.
In dimension four, this implies that 
the cohomology with coefficients in $\Lambda^2E^*$ concentrates in degree two.
\end{abstract}

\setcounter{tocdepth}{1}
\tableofcontents

\section*{Introduction}\label{sec:intro}

The Chern--Hopf--Thurston conjecture predicts that every closed
aspherical manifold of dimension $2m$ satisfies
\begin{equation}\label{eq:Hopf}
 (-1)^m\chi(M)\geq0.
\end{equation}
The conjecture has been established under some geometric,
complex-analytic, and topological assumptions. For closed real hyperbolic
manifolds, the predicted sign follows from the Gauss--Bonnet theorem,
and more generally it holds for closed locally symmetric manifolds
of noncompact type \cite{Olb02}. Gromov proved the strict inequality
for closed K\"ahler hyperbolic manifolds \cite{Gro91}, while
Cao--Xavier and Jost--Zuo established the non-strict inequality for
compact K\"ahler manifolds with nonpositive sectional curvature
\cite{CX01,JZ00}. In dimension four, the conjecture holds for closed
Riemannian manifolds with nonpositive sectional curvature
\cite{Chern}, closed aspherical complex surfaces \cite{JK93},
closed nonpositively curved piecewise Euclidean cubical manifolds
\cite{DO01}, and closed generalized Haken manifolds \cite{DE14}.
For closed aspherical manifolds whose fundamental groups contain
an infinite amenable normal subgroup, Cheeger--Gromov proved the
stronger conclusion $\chi(M)=0$ \cite{CG86}. More recently,
Deng--Wang established the conjecture for closed aspherical complex
projective manifolds whose fundamental groups admit a linear
representation with finite kernel over a field of arbitrary
characteristic \cite{DengWang}.

In this paper we establish the conjecture for closed
properly convex real projective $4$-manifolds. A properly
convex projective $n$-manifold is a quotient $M=\Omega/\Gamma$, where
$\Omega\subset\RP^n$ is an open convex set bounded in an affine
chart, and $\Gamma$ acts freely, properly discontinuously, and
cocompactly by projective transformations. The domain $\Omega$
is contractible, so $M$ is aspherical. For a general domain, the Cheng--Yau
construction supplies a canonical Hessian metric on its cone
\cite{ChengYau}. We use this metric to prove a vanishing result for flat
cohomology with coefficients in exterior powers of the holonomy
representation. For oriented four-manifolds, this yields the following:

\begin{maintheorem}\label{thm:four}
Let $M$ be a closed properly convex real projective $4$-manifold.
Then $\chi(M)\geq0$. More precisely, if $M$ is oriented, let $E\to M$ be the flat
rank-five bundle defined by the cone-preserving unimodular lift
of its holonomy. Then
\begin{equation}\label{eq:chi-main}
 10\chi(M)=\dim_{\R}H^2(M;\Lambda^2E^*).
\end{equation}
\end{maintheorem}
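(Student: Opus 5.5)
The plan is to combine the vanishing theorem announced in the abstract with Poincaré duality for flat bundles and the multiplicativity of the Euler characteristic under twisting by a flat bundle. If $M$ is not orientable, we pass to its orientation double cover $\hat M$. It is again a closed properly convex projective $4$-manifold (same $\Omega$, index-two subgroup of $\Gamma$), and $\chi(\hat M)=2\chi(M)$. So it suffices to treat oriented $M$, where the inequality follows from \eqref{eq:chi-main}.

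First, for any flat real vector bundle $F\to M$ of rank $r$ on a closed manifold, $\chi(M;F)=\sum_q(-1)^q\dim H^q(M;F)=r\,\chi(M)$. This holds because the twisted cellular cochain complex of a finite CW structure has $r$ times as many generators in each degree. Take $F=\Lambda^2E^*$, which has rank $\binom52=10$. We get $10\chi(M)=\sum_{q=0}^4(-1)^q\dim H^q(M;\Lambda^2E^*)$. It therefore suffices to show that $H^q(M;\Lambda^2E^*)=0$ for $q\neq2$.

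For $q<2$ we apply the vanishing theorem with $n=4$ and $p=2$. The threshold is $p(n+1-p)/(n+1)=6/5>1$, so $H^0$ and $H^1$ vanish. For $q=3,4$ we use Poincaré duality with twisted coefficients on the oriented manifold $M$: $H^q(M;F)\cong H^{4-q}(M;F^*)^*$. Here $F^*=\Lambda^2E$. Because the lift of the holonomy is unimodular, the wedge pairing $\Lambda^2E\otimes\Lambda^3E\to\Lambda^5E$ is flat, and $\Lambda^5E$ is a trivial flat line bundle. This gives a flat isomorphism $\Lambda^2E\cong\Lambda^3E^*$. Hence $H^q(M;\Lambda^2E^*)\cong H^{4-q}(M;\Lambda^3E^*)^*$, and the vanishing theorem with $p=3$ has threshold $3\cdot2/5=6/5$. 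So $H^{4-q}(M;\Lambda^3E^*)=0$ for $4-q\in\{0,1\}$, that is, for $q=3,4$. Only $H^2$ survives, which yields \eqref{eq:chi-main} and $\chi(M)\ge0$.

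The main obstacle is the vanishing theorem itself, which we take as established later in the paper. The substantive point is the sharp constant $p(n+1-p)/(n+1)$: in dimension four it must exceed $1$ for both $p=2$ and $p=3$ so that degree one is killed. Everything else reduces to bookkeeping. The one place needing care is that the unimodular, cone-preserving lift is what makes $\det E$ flat-trivial, not merely trivial up to the sign or scalar ambiguity of $\GL$ lifts. That triviality is exactly what makes the duality $\Lambda^2E\cong\Lambda^3E^*$ flat.
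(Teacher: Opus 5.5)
Your proposal is correct and follows the paper's proof essentially step for step. The vanishing theorem with $p=2,3$ kills degrees $0,1$. Twisted Poincar\'e duality, combined with the flat isomorphism $\Lambda^2E\cong\Lambda^3E^*$ coming from the unimodular lift, kills degrees $3,4$. Then $\chi(M;\Lambda^2E^*)=10\chi(M)$ gives the identity, and the nonorientable case is handled by the orientation double cover. The only cosmetic difference is that you obtain the duality from the wedge pairing into the flat-trivial line bundle $\Lambda^5E$, whereas the paper contracts with a parallel volume form on $E$; these give the same isomorphism.
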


The holonomy lift and its flat bundle are constructed in
Section \ref{sec:cone}. No strict convexity, boundary regularity,
or irreducibility hypothesis is imposed, and the proof makes
no assumption on the sectional curvature of a metric on $M$.
The identity \eqref{eq:chi-main} explains the sign by realizing
a positive multiple of the Euler characteristic as the dimension
of a cohomology group.

The result also completes the determination of the possible
Euler characteristics in the oriented four-dimensional case.
Kobayashi, using an earlier Pontryagin-form identity of Avez,
proved the rational vanishing of the Pontryagin classes of real
projective manifolds
\cite[Theorem~4.16]{Kobayashi1984}. In dimension four, this
forces the signature to vanish and the Euler characteristic
to be even. These restrictions alone leave its sign undetermined.
Riolo--Seppi--Slavich construct closed oriented properly convex
projective four-manifolds with every positive even Euler
characteristic \cite[Theorem~1.3]{RioloSeppiSlavich}.
Together with the simplex quotients, for which $\chi=0$,
Theorem \ref{thm:four} shows that the possible values are precisely
the even non-negative integers.

\subsection*{Cohomology with flat coefficients}

The vanishing theorem underlying Theorem \ref{thm:four} holds in
arbitrary dimension. Let $M$ be oriented of dimension $n$,
put $N=n+1$, and let $E$ be the standard flat rank-$N$ bundle
defined by its unimodular holonomy lift. For $0\leq p\leq N$, set
\[
 F_p=\Lambda^pE^*.
\]
The flat connection defines an exterior differential $\dd_p$ on
$\Omega^\bullet(M;F_p)$, and its cohomology is denoted by
$H^\bullet(M;F_p)$.

\begin{maintheorem}\label{thm:vanishing}
Let $M$ be a closed oriented properly convex real projective
$n$-manifold, $n\geq2$, and let $E$ be its standard unimodular
flat bundle. For $0\leq p\leq n+1$ and $0\leq q\leq n$,
\begin{equation}\label{eq:vanishing-intro}
 H^q(M;\Lambda^pE^*)=0
 \quad\text{if}\quad q<\frac{p(n+1-p)}{n+1}.
\end{equation}
\end{maintheorem}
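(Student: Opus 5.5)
The plan is to realize $H^q(M;F_p)$ by harmonic forms for a suitable metric on the flat bundle $F_p$ and show, via a Bochner identity, that the curvature term is bounded below by a positive multiple of $p(N-p)/N-q$. Then every harmonic representative vanishes in the range \eqref{eq:vanishing-intro}.

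\textbf{Step 1: the geometric model.} Let $C\subset\R^N$ be the cone over $\Omega$, and let $u$ be the Cheng--Yau solution of $\det D^2u=e^{2u}$ (suitably normalized) on $C$. It is convex, $\Gamma$-invariant under the unimodular lift, and logarithmically homogeneous: $u(\lambda x)=u(x)-N\log\lambda$. Its Hessian $g=D^2u$ is a complete metric on $C$. For $x\in C$, the fibre $E_x$ is $\R^N$, and $g_x$ gives a metric on $\R^N$; the $\Gamma$-invariance and homogeneity mean it descends to a metric $h$ on $E\to M$, after restricting to the level set $\{u=0\}\cong\Omega$. On $M$ we use the induced Blaschke (affine) metric on the level hypersurface. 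With these choices, $\Omega^\bullet(M;F_p)$ acquires Hodge theory: $H^q(M;F_p)\cong\ker\Delta_p$, where $\Delta_p=\dd_p\dd_p^*+\dd_p^*\dd_p$ is built from $h$. The bundle metric is not flat-parallel, so $\dd_p=D+\omega$ splits into a metric connection $D$ plus a self-adjoint $\operatorname{End}$-valued $1$-form $\omega$ (second fundamental form of the flat structure). This is the usual Higgs-bundle / harmonic-metric decomposition.

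\textbf{Step 2: transfer Akagawa's identity.} On the cone, forms in $\Lambda^q T^*\otimes\Lambda^pE^*$ are double forms. Akagawa's identity for Hessian manifolds expresses the Laplacian of the flat differential as a rough Laplacian plus zeroth-order terms involving $D^3u$ and $D^4u$. Homogeneity lets us pull $\Gamma$-invariant forms on $M$ back to homogeneous forms on $C$, so the cone identity descends to
\[
\|\dd_p\alpha\|^2+\|\dd_p^*\alpha\|^2=\|\nabla\alpha\|^2+\langle\mathcal R\alpha,\alpha\rangle+(\text{cone terms}).
\]
Here $\mathcal R$ is algebraic. The cone terms come from the radial direction: the radial derivative of $u$ acts on $\Lambda^pE^*$ with the weight produced by homogeneity, contributing $p(N-p)/N$ (trace-free part of degree $p$ on $\Lambda^p$ relative to the scaling). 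The form degree contributes $-q$. The Cheng--Yau equation $\det D^2u=e^{2u}$ controls the $D^4u$ terms through its derivatives, in the manner of the Calabi/Pogorelov computations.

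\textbf{Step 3: the estimate and conclusion.} Show that $\langle\mathcal R\alpha,\alpha\rangle\geq c\,(p(N-p)/N-q)|\alpha|^2$ with $c>0$, and that the leftover cubic-form ($D^3u$) terms are absorbed by $\|\nabla\alpha\|^2$ via a Kato/Cauchy--Schwarz argument. Then, for harmonic $\alpha$, integrating over the closed manifold $M$ gives $\alpha=0$ when $q<p(N-p)/N$.

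\textbf{Main obstacle.} The hard step is this pointwise algebraic estimate, in particular controlling the cubic form of the Cheng--Yau metric, which is not bounded a priori in sign. I would first try to cancel the cubic terms exactly using the structure of the Cheng--Yau equation, i.e.\ its Ricci-type identity $\mathrm{Ric}(g)=-(N+1)\ldots$-normalization, rather than estimating them. I expect the sharp constant $p(N-p)/N$ to emerge precisely from that cancellation.
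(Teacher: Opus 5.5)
Your Step~1 (Hodge theory for a non-parallel metric on $F_p$) and your idea of passing between $M$ and homogeneous forms on the cone both agree with the paper. The proof nevertheless has a genuine gap at the step you flag as the main obstacle, and the mechanism you propose there cannot close it. The identity in Step~2 is never produced: $\mathcal R$ and the ``cone terms'' are undefined, and the inequality $\langle\mathcal R\alpha,\alpha\rangle\geq c\,(p(N-p)/N-q)|\alpha|^2$ in Step~3 is asserted rather than derived. Akagawa's identity is also not a Weitzenb\"ock formula of the form ``rough Laplacian plus terms in $D^3u$, $D^4u$''. More seriously, a Kato/Cauchy--Schwarz absorption of cubic cross terms $\langle C\ast\nabla\alpha,\alpha\rangle$ into $\|\nabla\alpha\|^2$ loses rather than cancels. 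It leaves a zeroth-order error comparable to $|C|^2|\alpha|^2$, which depends on the cubic form and has no reason to stay below the gap $p(N-p)/N-q\leq N/4$. The threshold in the theorem does not involve the cubic form at all, so it cannot come out of an estimate of this kind. Nor does the Cheng--Yau equation enter through a Ricci-type identity.

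The missing idea is to compare the twisted Laplacian not with a rough Laplacian but with the Laplacian of a \emph{second} flat differential. On the cone, double forms in $\Lambda^pT^*\C\otimes\Lambda^qT^*\C$ carry two commuting flat differentials: $\partial$ on the coefficient factor and $\bdel$, the twisted de Rham differential. Akagawa's Hessian K\"ahler identities, $\Lambda\partial+\partial\Lambda=-\bdelt+\bar\iota(X_\alpha)$ and $\Lambda\bdel+\bdel\Lambda=-\delta+\iota(X_\alpha)$, hold for \emph{every} Hessian metric. Their proof uses only first derivatives of $g$, and the cubic-form terms cancel identically.

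Beyond the homogeneity and $\Gamma$-invariance you already use, the Cheng--Yau equation enters only to identify the Koszul form $\alpha=\dd\phi$, i.e.\ $X_\alpha=-H$. Cartan's formula $\partial\iota(H)+\iota(H)\partial=D_H+p\id$, together with its barred analogue with $q$, then gives exactly $\bBox=\Box+(p-q)\id$. Positivity comes for free from $\Box$: on dilation-invariant forms it equals $\mathsf P_{p,q}^*\mathsf P_{p,q}+\mathsf P_{p-1,q}\mathsf P_{p-1,q}^*$ for a first-order operator $\mathsf P$ on $M$. For $\eta\in\Omega^q(M;F_p)$, the invariant extension $e^{-pr}\eta$ satisfies $\bBox(e^{-pr}\eta)=e^{-pr}(\Delta_{p,q}\eta+\tfrac{p^2}{N}\eta)$. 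Projecting to horizontal forms yields $\Delta_{p,q}=\mathsf A^*\mathsf A+\mathsf B^*\mathsf B+(p-q-p^2/N)\id$. So the constant $p(N-p)/N-q$ is $p$ and $-q$ from the Euler field acting in the two factors, minus $p^2/N$ from the radial weight. Nothing is estimated, and no Higgs decomposition, curvature term, or Kato inequality appears.
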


The theorem follows from a Bochner identity on the compact
manifold $M$. The Cheng--Yau metric on the cone determines
a Riemannian metric $k$ on $M$ and a metric
on $F_p$. Let $\dd_p^*$ be the resulting formal adjoint and
let $\Delta_{p,q}=\dd_p\dd_p^*+\dd_p^*\dd_p$. We prove
the quantitative estimate
\begin{equation}\label{eq:gap-intro}
 \|\dd_pa\|^2+\|\dd_p^*a\|^2
 \geq\left(\frac{p(N-p)}N-q\right)\|a\|^2,
 \qquad a\in\Omega^q(M;F_p).
\end{equation}
More precisely, we construct first-order operators
$\mathsf A_{p,q},\mathsf B_{p,q}$ on bundles over $M$ such
that
\begin{equation}\label{eq:Bochner-intro}
 \Delta_{p,q}
 =\mathsf A_{p,q}^*\mathsf A_{p,q}
  +\mathsf B_{p,q}^*\mathsf B_{p,q}
  +\left(p-q-\frac{p^2}{N}\right)\id.
\end{equation}
The bundles, metrics, and operators are defined in
Section \ref{sec:compact-Bochner}. Taking the inner product with $a$
gives \eqref{eq:gap-intro}. The flat connection need not
preserve the fiber metric, but the corresponding elliptic
Hodge theory still represents flat cohomology by harmonic
forms; we include the standard argument in Lemma \ref{lem:flat-Hodge}. \\

Theorem \ref{thm:four} is a consequence of \eqref{eq:vanishing-intro}. For $N=5$, and $p=2,3$ we have
\[
 \frac{p(5-p)}5=\frac65>1.
\]
Theorem~\ref{thm:vanishing} therefore eliminates degrees zero
and one cohomology with coefficients in $F_2$ and $F_3$. A parallel volume form on $E$
identifies $F_2^*$ with $F_3$. Poincar\'e duality then
eliminates degrees three and four for $F_2$. Its cohomology
is thus concentrated in degree two, and the Euler characteristic
with rank-ten flat coefficients yields \eqref{eq:chi-main}.
In higher even dimensions, the vanishing range and its dual
can leave other degrees besides the middle one, so the theorem
does not by itself establish the sign conjecture there.

\subsection*{Organization of the paper}

Section~\ref{sec:cone} constructs the unimodular
holonomy bundle and the radial product.
Section~\ref{sec:double} defines the double-form
operators and derives their identity in the cone over $\Omega$ following \cite{Akagawa}.
Section~\ref{sec:radial} transfers it to the aforementioned
Bochner identity on $M$ and proves Theorem \ref{thm:vanishing}.
Section~\ref{sec:euler} gives the topological argument
for Theorem \ref{thm:four}.
Appendix~\ref{app:Hessian} gives a direct proof in coordinates of the identities in \cite{Akagawa}.

\subsection*{AI declaration} Chat GPT Sol 6.1 was used for literature review, proofreading and editing. All mathematical ideas are original of the author.

\subsection*{Acknowledgment} The author acknowledges support from the Italian Ministry of University and Research (MUR) through the FIS3 project “Geometry and Topology of higher Teichm\"uller spaces”, grant no. FIS-2024-05142.

\section{The Cheng--Yau cone and the holonomy bundle}\label{sec:cone}

We work in dimension $n\geq2$ and write $N=n+1$. Unless explicitly
stated otherwise, $M=\Omega/\Gamma$ is oriented. We begin by fixing the lift of the holonomy.

\subsection{The unimodular lift}

Choose an affine chart in which $\Omega$ is bounded and convex.
After identifying this chart with $\{x^0=1\}\subset\R^N$, let
\[
 \C=\{t(1,z):t>0,\ z\in\Omega\}\subset V=\R^N.
\]
This is an open convex cone. Its radial
projection identifies $\C/\R_{>0}$ with $\Omega$.

\begin{lemma}\label{lem:lift}
The holonomy of an oriented properly convex projective manifold has
a unique lift $\Gamma\to\SL(V)$ that preserves $\C$.
\end{lemma}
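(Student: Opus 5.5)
Each $\gamma\in\Gamma$ acts on $\RP^n=\mathbb P(V)$ by an element of $\mathrm{PGL}(V)=\GL(V)/\R^*$ preserving $\Omega$. I will first show that some linear representative preserves the cone $\C$, then normalize its determinant, and finally check uniqueness and the homomorphism property. Fix $\gamma$ and any representative $g\in\GL(V)$. The preimage of $\Omega$ in $V\setminus\{0\}$ is $\C\cup(-\C)$, and these two pieces are the connected components of the preimage: $\C$ is convex hence connected, and $\C\cap(-\C)=\emptyset$ because $\Omega$ lies in the affine chart $\{x^0=1\}$, so $x^0>0$ on $\C$. Since $g$ preserves the preimage, it permutes its two components. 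Hence exactly one of $g$, $-g$ maps $\C$ onto $\C$. Call it $g_+$.

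Next I normalize. For $\lambda>0$ the map $\lambda g_+$ still preserves $\C$, while for $\lambda<0$ it does not. So the representatives preserving $\C$ are exactly the positive multiples of $g_+$. The key point is that $\det g_+>0$. I would get this from orientation: $\C$ is an open cone on which $g_+$ commutes with the radial $\R_{>0}$-action. The induced map on $\C/\R_{>0}\cong\Omega$ is the projective action of $\gamma$, which preserves the orientation of $\Omega$ because $M$ is oriented and $\gamma$ is a deck transformation. The orientation of $\C\cong\Omega\times\R_{>0}$ is the product orientation, and $g_+$ preserves the radial direction, so $g_+$ preserves the orientation of the open set $\C\subset V$. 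Therefore $\det g_+>0$. Put $\tilde\gamma=(\det g_+)^{-1/N}g_+\in\SL(V)$. It preserves $\C$ and represents $\gamma$.

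Uniqueness follows from the same analysis. Any element of $\SL(V)$ preserving $\C$ and representing $\gamma$ is $\lambda g_+$ with $\lambda>0$ and $\lambda^N\det g_+=1$. The positive real solution of this equation is unique, which gives uniqueness even when $N$ is even. This also yields the homomorphism property. The product $\tilde\gamma_1\tilde\gamma_2$ lies in $\SL(V)$, preserves $\C$, and represents $\gamma_1\gamma_2$, so it equals $\widetilde{\gamma_1\gamma_2}$.

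The main obstacle is the determinant-sign step, where orientability is genuinely used. When $N$ is odd one can avoid it, because $-\id$ has determinant $-1$ and a negative $\det g_+$ could be fixed by a sign change. But any sign change would leave $\C$ unpreserved, so the orientation argument is the honest route in general. The step needs care in identifying the local orientation behaviour of a cone-preserving linear map with that of its projectivization on $\Omega$. I would do this concretely: in coordinates $(t,z)\mapsto t(1,z)$ the map $g_+$ becomes $(t,z)\mapsto(t\,\phi(z),\gamma z)$ with $\phi>0$. Its Jacobian determinant is then $\phi(z)^{\,\cdot}$ times $\det D\gamma$ up to a positive factor, so its sign equals the sign of $\det D\gamma$, which is positive.
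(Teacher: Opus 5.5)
Your proof is correct and follows the same route as the paper. The paper cites Ballas--Danciger--Lee for the unique $\C$-preserving lift to $\mathrm{SL}^{\pm}(V)$, which you instead derive directly from the fact that the preimage of $\Omega$ has the two components $\C$ and $-\C$; it then uses the same fiber-orientation-plus-base-orientation argument for positivity of the determinant, which your Jacobian computation $\phi(z)\det D\gamma$ makes explicit, and your uniqueness-based check of the homomorphism property is a welcome addition. Your aside that the sign step can be avoided for odd $N$ is not right, as you then observe yourself: $-g_+$ maps $\C$ to $-\C$, and a reflection preserving the cone over a round disk has determinant $-1$, so orientability is genuinely needed. The aside plays no role in your argument, however.
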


\begin{proof}
By \cite[Section~2.1]{BallasDancigerLee}, the holonomy has a
unique lift to
\[
\mathrm{SL}^{\pm}(V)=\{A\in\mathrm{GL}(V):|\det A|=1\}
\]
whose image preserves the cone $\mathcal C$.
We check that this lift takes values in $\mathrm{SL}(V)$.
The radial projection $\mathcal C\to\Omega$ has naturally
oriented fibers: each fiber is a positive ray, oriented by
increasing radial parameter. Every cone-preserving linear
map preserves these fiber orientations. Moreover, the
holonomy acts on $\Omega$ by orientation-preserving maps,
because $M=\Omega/\Gamma$ is oriented. The lifted maps
therefore preserve the induced orientation of $\mathcal C$,
so their determinants are positive. 
Uniqueness follows from the uniqueness of the cone-preserving
lift to $\mathrm{SL}^{\pm}(V)$.
\end{proof}

We use the same symbol $\Gamma$ for the lifted subgroup of
$\SL(V)$. Define
\begin{equation}\label{eq:bundle-E}
 E=(\Omega\times V)/\Gamma,\qquad
 \gamma\cdot(z,v)=(\gamma z,\gamma v).
\end{equation}
Local constant sections on $\Omega$ define its flat connection.
The induced bundle $F_p=\Lambda^pE^*$ has monodromy
$\Lambda^p(\gamma^{-\mathsf T})$. Since $\Gamma\subset\SL(V)$,
any fixed volume form on $V$ defines a parallel volume form on $E$.

\subsection{The canonical potential}

Let $D$ denote the standard flat affine connection on $\C$, and
choose a constant volume form on $V$. We use the following
normalization of the Cheng--Yau canonical potential.

\begin{theorem}[Cheng--Yau]\label{thm:CY}
There is a unique smooth strictly convex function $\phi$ on $\C$
satisfying
\begin{equation}\label{eq:CY}
 \det D^2\phi=e^{2\phi}
\end{equation}
such that the Hessian metric $g=D\dd\phi$ is complete.
\end{theorem}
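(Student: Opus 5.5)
This is the Cheng--Yau theorem, and I would prove it by reducing it to the affine-sphere/real Monge--Amp\`ere theory on the bounded convex slice $\Omega$. The standard reduction exploits the homogeneity of $\C$. Write a point of $\C$ as $x=t(1,z)$ with $t>0$, $z\in\Omega$, and look for $\phi$ of the form
\[
 \phi(t(1,z))=-\tfrac{N}{2}\cdot\tfrac{2}{N}\cdot\tfrac N2\log t+\psi(z)\quad\text{(suitably normalized)},
\]
more precisely $\phi(x)=-c\log t+\Phi(z)$ with $c$ forced by the scaling of \eqref{eq:CY}. Under $x\mapsto\lambda x$ the left side $\det D^2\phi$ scales by $\lambda^{-2N}$. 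Hence $e^{2\phi}$ must scale the same way, giving $\phi(\lambda x)=\phi(x)-N\log\lambda$. So $\phi$ is $(-N)$-logarithmically homogeneous.

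The first step is to compute $\det D^2\phi$ for such a function. A direct computation in the coordinates $(t,z)$ reduces \eqref{eq:CY} to a Monge--Amp\`ere equation on $\Omega$ for $u=-e^{-\Phi/N}$ (or equivalently for the Legendre-type function $w$ with $w|_{\partial\Omega}=0$), of the form
\[
 \det D^2 u=\Big(\tfrac{c_N}{u}\Big)^{N+1},\qquad u<0\ \text{in}\ \Omega,\quad u|_{\partial\Omega}=0,\quad u\ \text{convex}.
\]
This is exactly Cheng--Yau's equation for hyperbolic affine spheres. The second step is to invoke their existence and uniqueness theorem for this Dirichlet problem on a bounded convex domain. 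It gives a unique convex solution $u\in C^\infty(\Omega)\cap C^0(\overline\Omega)$. Defining $\phi$ from $u$ then yields a smooth strictly convex solution of \eqref{eq:CY}. Strict convexity follows from $D^2u>0$ together with the negative radial term.

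The third step is completeness of $g=D\dd\phi$. Here $g$ splits, up to a positive constant, as $N\,(\dd t/t)^2$ plus a metric on $\Omega$ comparable to the Blaschke (affine) metric of the affine sphere. Cheng--Yau proved the Blaschke metric is complete, and the radial factor $\dd t/t$ is complete on $\R_{>0}$, so the warped-type product is complete.

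The final step is uniqueness among complete solutions, and this is where I expect the main obstacle. A general complete solution need not be a priori logarithmically homogeneous. I would first handle the scaling: if $\phi$ is a complete solution, then $\phi_\lambda(x)=\phi(\lambda x)+N\log\lambda$ is another. The same holds for $\phi\circ A+\log|\det A|^{-1}$ for linear $A$ preserving $\C$, though only dilations are needed here. It therefore suffices to prove uniqueness of complete solutions outright, which then forces homogeneity. For this I would use the Cheng--Yau maximum principle argument on complete manifolds. Compare two solutions $\phi_1,\phi_2$ and set $f=\phi_1-\phi_2$. The arithmetic--geometric mean inequality gives $\Delta_{g_2}f\ge N(e^{2f/N}-1)$, schematically. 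Since $g_2$ is complete with Ricci curvature bounded below (Cheng--Yau's estimate for such Hessian metrics), the Omori--Yau maximum principle applies. To make it work I need $f$ bounded above; I would get this from barrier comparisons near $\partial\C$ using the homogeneous solution and the characteristic-function potential. The conclusion is then $f\le0$, and symmetrically $f\ge0$. Securing this a priori boundedness of $f$, and the Ricci lower bound, is the delicate part; I would cite Cheng--Yau for both rather than reprove them.
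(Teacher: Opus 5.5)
The paper gives no proof of this statement. It quotes Cheng--Yau for the canonical potential and Fox (Theorem~1.3 and Corollary~1.1) for the precise formulation, in particular for uniqueness among \emph{all} complete solutions with no homogeneity assumed.

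Your existence and completeness steps are correct, and they are a more explicit route than the paper's, through the affine-sphere Dirichlet problem. The reduction checks out. Put $f=e^{-\phi/N}$, which is $1$-homogeneous and concave, and $u(z)=-f(1,z)$. In the unimodular frame $(x,\partial_{z^1},\dots,\partial_{z^n})$ at $x=(1,z)$ one finds $D^2\phi(x,x)=N$, and the Schur complement is $N\,D^2u/(-u)$. So \eqref{eq:CY} becomes $\det D^2u=N^{-N}(-1/u)^{N+1}$, and $u=N^{-1/2}v$ with $v$ Cheng--Yau's normalized solution. (The coefficient of $\log t$ is $-N$, not the garbled value in your first display.)

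For completeness, note that the splitting is an exact product $N\,\dd r^2+k$ in the variables of Proposition~\ref{prop:product}, not in $(t,z)$. Here $k$ is a constant multiple of the Blaschke metric of the level set $\phi=0$, which is the radial graph of $-1/u$. Cheng--Yau's completeness of that metric therefore suffices.

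The genuine gap is in the uniqueness step, which is exactly the part the paper delegates to Fox. AM--GM controls the flat trace:
$g_2^{ij}\partial_i\partial_jf=\Delta_{g_2}f+\langle\dd\phi_2,\dd f\rangle_{g_2}\geq N(e^{2f/N}-1)$,
so a drift term appears. When $\phi_2$ is the homogeneous solution this is harmless. Indeed $|\dd\phi_2|_{g_2}^2=N$, and a Hessian metric satisfies $\operatorname{Ric}\geq-\tfrac14|\alpha|_g^2\,g$, so $\operatorname{Ric}_{g_2}\geq-\tfrac N4g_2$. Omori--Yau then gives $\phi_1\leq\phi_2$.

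But ``symmetrically $f\geq0$'' is not symmetric. It needs the maximum principle on $(\C,g_1)$ for the unknown, a priori non-homogeneous solution. There you have no control of $|\dd\phi_1|_{g_1}$, hence neither of the drift nor of the Ricci lower bound, and you give no source for one. The missing idea is to work with the Bakry--\'Emery tensor of the operator $g^{ij}\partial_i\partial_j$ itself. In the notation of the appendix it equals $\operatorname{tr}(\Gamma_i\Gamma_j)-\partial_i\alpha_j$. Since \eqref{eq:CY} alone gives $\alpha=\dd\phi$, this tensor is $\geq -g_1$ for \emph{every} solution. That lower bound is what the Omori--Yau principle for this drift Laplacian needs, and it is the input of Fox's Schwarz lemma.

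Your proposed a priori bound $\sup f<\infty$, via barriers near $\partial\C$, also has a problem. It would not control $f$ along the rays toward the apex or toward infinity, where an unknown solution is unconstrained. It is not needed anyway: applying Omori--Yau to the bounded function $(1+e^{2f/N})^{-1/2}$, as in Yau's Schwarz lemma, gives $f\leq0$ directly.
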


This is the canonical potential theorem of Cheng--Yau
\cite{ChengYau}; see also \cite[Theorem~1.3 and Corollary~1.1]{Fox}
for a precise formulation and the uniqueness statement.
Completeness is used here to characterize the potential;
the compact Bochner estimate below uses only local identities.
The determinant in
\eqref{eq:CY} is computed in affine coordinates of unit volume for
the chosen constant volume form.

\begin{lemma}\label{lem:potential}
If $A\in\GL(V)$ preserves $\C$, then
\begin{equation}\label{eq:automorphism}
 \phi(Ax)=\phi(x)-\log|\det A|.
\end{equation}
In particular,
\begin{equation}\label{eq:homogeneous}
 \phi(tx)=\phi(x)-N\log t\quad(t>0),\qquad
 \phi(\gamma x)=\phi(x)\quad(\gamma\in\Gamma).
\end{equation}
The transformations $A$ and the radial dilations preserve $g$.
\end{lemma}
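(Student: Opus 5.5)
The plan is to use the uniqueness in Theorem~\ref{thm:CY}. Given $A\in\GL(V)$ with $A(\C)=\C$, we build a competitor potential and check that it satisfies the same equation and the same completeness condition. Set
\[
\psi(x)=\phi(Ax)+\log|\det A| .
\]
Since $A$ is linear, it is affine for the flat connection $D$, so $D^2(\phi\circ A)=A^{\mathsf T}(D^2\phi\circ A)A$ in affine coordinates.

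\textbf{Monge--Amp\`ere equation.} Taking determinants in the chosen unit-volume coordinates gives
\[
\det D^2(\phi\circ A)(x)=(\det A)^2\,e^{2\phi(Ax)}.
\]
The additive constant does not change the Hessian, so
\[
\det D^2\psi=(\det A)^2e^{2\phi\circ A}=e^{2\phi\circ A+2\log|\det A|}=e^{2\psi}.
\]
Thus $\psi$ solves \eqref{eq:CY}.

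\textbf{Convexity and completeness.} The function $\psi$ is smooth and strictly convex, because $\phi\circ A$ is. Its Hessian metric is $D\dd\psi=A^*g$. Since $A\colon\C\to\C$ is a diffeomorphism, $A$ is an isometry from $(\C,A^*g)$ onto $(\C,g)$, so $A^*g$ is complete. Uniqueness in Theorem~\ref{thm:CY} then gives $\psi=\phi$, which is \eqref{eq:automorphism}. The same computation shows $A^*g=g$, so $A$ preserves $g$.

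\textbf{Special cases.} For the dilation $A=t\,\id$ with $t>0$, which preserves the cone $\C$, we have $\det A=t^N$; this gives $\phi(tx)=\phi(x)-N\log t$. For $\gamma\in\Gamma$, Lemma~\ref{lem:lift} gives $\det\gamma=1$, so $\phi\circ\gamma=\phi$. Both dilations and elements of $\Gamma$ are instances of $A$, so they preserve $g$.

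The only genuinely delicate point is the appeal to uniqueness. It needs the normalization in Theorem~\ref{thm:CY} to be exactly the one used here: the determinant is computed in unit-volume coordinates, and completeness is the characterizing condition. The pullback argument shows that both of these are preserved by $A$, so I expect no real obstacle; the rest is the routine transformation rule for Hessians under linear maps.
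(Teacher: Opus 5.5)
Your proposal is correct and follows the paper's proof essentially step for step. It introduces the competitor $\psi=\phi\circ A+\log|\det A|$, applies the Hessian transformation rule and the determinant computation, deduces completeness of $A^*g$ from the fact that $A$ is a diffeomorphism of $\C$, invokes uniqueness in Theorem~\ref{thm:CY}, and then specializes to $A=t\,\id$ and $A=\gamma$.
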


\begin{proof}
Set $\psi(x)=\phi(Ax)+\log|\det A|$. The chain rule gives
\[
 D^2\psi(x)=A^{\mathsf T}(D^2\phi)(Ax)A.
\]
It follows that
\[
 \det D^2\psi(x)
 =|\det A|^2 e^{2\phi(Ax)}=e^{2\psi(x)}.
\]
The function $\psi$ is strictly convex, and its Hessian metric
is $A^*g$, which is complete because $A$ is a diffeomorphism of
$\C$. Uniqueness in Theorem \ref{thm:CY} gives $\psi=\phi$,
which proves \eqref{eq:automorphism}. Taking $A=t\id$ gives the
first identity in \eqref{eq:homogeneous}, and taking
$A=\gamma\in\Gamma$ gives the second. Applying $D\dd$ to
\eqref{eq:automorphism} proves $A^*g=g$.
\end{proof}

\subsection{The radial product}

Let $H$ be the vector field, defined by $H_x=x$ in the affine
identification $T_x\C=V$. Set
\[
 \Sigma=\phi^{-1}(0),\qquad k=g|_{T\Sigma}.
\]
Every radial ray meets $\Sigma$ once. Since $\phi$ is
$\Gamma$-invariant, the radial projection induces a diffeomorphism
$\Sigma/\Gamma\cong M$. We use $k$ also for the quotient metric.

\begin{proposition}\label{prop:product}
The map
\[
 \Phi:\R\times\Sigma\longrightarrow\C,
 \qquad \Phi(r,x)=e^rx,
\]
is a $\Gamma$-equivariant diffeomorphism, and
\begin{equation}\label{eq:product}
 \Phi^*g=N\,\dd r^2+k.
\end{equation}
In particular, $X=\C/\Gamma$ identifies with $\R\times M$ with
this product metric. In these coordinates,
\begin{equation}\label{eq:radial-data}
 H=\partial_r,\quad \phi=-Nr,\quad
 |\dd r|_g^2=N^{-1},\quad
 \dd\vol_g=\sqrt N\,\dd r\,\dd\vol_k.
\end{equation}
\end{proposition}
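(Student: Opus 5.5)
We use the homogeneity identities of Lemma~\ref{lem:potential}. Differentiating $\phi(tx)=\phi(x)-N\log t$ at $t=1$ gives the Euler relation $\dd\phi(H)=H\phi=-N$. Since $\phi$ is strictly decreasing along each radial ray, from $+\infty$ as $t\to0$ to $-\infty$ as $t\to\infty$, every ray $\{tx\}$ meets $\Sigma$ exactly once, transversally. Hence $\Sigma$ is a smooth hypersurface, since $\dd\phi(H)\neq0$, and it is $\Gamma$-invariant. Consequently $\Phi$ is a bijection with smooth inverse $x\mapsto(-\phi(x)/N,\,e^{\phi(x)/N}x)$. Indeed $\phi(e^{\phi(x)/N}x)=\phi(x)-\phi(x)=0$, so $\Phi$ is a diffeomorphism. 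Equivariance holds because $\Gamma$ acts linearly: $\gamma(e^rx)=e^r\gamma x$ and $\gamma\Sigma=\Sigma$. In these coordinates $\Phi_*\partial_r=H$ and $\phi(e^rx)=-Nr$, which gives the first two identities in \eqref{eq:radial-data}.

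For the metric, we differentiate the Euler relation once more. Write $g=D\dd\phi$. Since $DH=\id$, differentiating $\dd\phi(H)=-N$ in a direction $Y$ gives
\[
 g(H,Y)=D_Y(\dd\phi(H))-\dd\phi(D_YH)=-\dd\phi(Y).
\]
Taking $Y=H$ gives $g(H,H)=N$. Taking $Y$ tangent to a level set of $\phi$ gives $g(H,Y)=0$, so $H$ is $g$-orthogonal to every level set $e^r\Sigma$.

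It remains to show that the tangential part of $\Phi^*g$ does not depend on $r$. By Lemma~\ref{lem:potential}, the dilation $\delta_t(x)=tx$ satisfies $\delta_t^*g=g$. Moreover $\Phi(r,\cdot)=\delta_{e^r}\circ\Phi(0,\cdot)$ and $\dd\Phi(\partial_r)=H$. Therefore
\[
 \Phi^*g=g(H,H)\,\dd r^2+2\,g(H,\cdot)\,\dd r+k=N\,\dd r^2+k,
\]
where the cross term vanishes by orthogonality. This proves \eqref{eq:product}.

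The remaining identities follow directly. From $\phi=-Nr$ we get $\dd r=-\dd\phi/N$. Using $g(H,\cdot)=-\dd\phi$, the vector field dual to $\dd\phi$ is $-H$. Hence $|\dd\phi|_g^2=g(H,H)=N$ and $|\dd r|^2_g=N^{-1}$. The volume form of the product metric $N\,\dd r^2+k$ is $\sqrt N\,\dd r\,\dd\vol_k$. Finally, the $\Gamma$-action commutes with the $r$-coordinate and preserves $k$, so the product structure descends to $X=\C/\Gamma\cong\R\times(\Sigma/\Gamma)=\R\times M$.

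The only delicate point is the first step: surjectivity of the projection from $\Sigma$ requires the limits of $\phi$ along each ray, and these come from the homogeneity $\phi(tx)=\phi(x)-N\log t$. Everything else is a formal consequence of homogeneity together with $DH=\id$.
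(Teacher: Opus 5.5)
Your proof is correct and follows essentially the same route as the paper. Both arguments use the Euler relation $\dd\phi(H)=-N$ and its derivative $g(H,\cdot)=-\dd\phi$, the explicit inverse $y\mapsto(-\phi(y)/N,\,e^{\phi(y)/N}y)$, and dilation invariance of $g$ to identify the tangential block with $k$ at every level. The differences are cosmetic: you differentiate the Euler relation via $DH=\id$ rather than in affine coordinates, and your ray-limit argument for surjectivity is already implied by the explicit inverse.
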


\begin{proof}
In affine coordinates $H=\Sigma_{i} x^i\partial_i$. Differentiating
$\phi(tx)=\phi(x)-N\log t$ in $t$ at $t=1$ gives
\[
 \sum_i x^i\phi_i=-N.
\]
Differentiating this equation in $x^j$ then gives
\[
 \phi_j+\sum_i x^i\phi_{ij}=0.
\]
Since $g_{ij}=\phi_{ij}$, this is the coordinate form of
\begin{equation}\label{eq:Euler-gradient}
 g(H,Y)=-\dd\phi(Y),\qquad \grad_g\phi=-H.
\end{equation}
In particular, $g(H,H)=-\dd\phi(H)=N$, and $H$ is orthogonal
to every level set of $\phi$.

\noindent The inverse of $\Phi$ is
\[
 y\longmapsto
 \left(-\frac{\phi(y)}N,\ e^{\phi(y)/N}y\right).
\]
This proves that $\Phi$ is a diffeomorphism.
To compute its metric in coordinates, write a local
parametrization of $\Sigma$ as $\sigma(z)$ and put
$\sigma_a=\partial_a\sigma$. Then
\[
 d\Phi(\partial_r)=e^r\sigma,\qquad
 d\Phi(\partial_a)=e^r\sigma_a.
\]
Radial dilation preserves $g$ by Lemma \ref{lem:potential}.
Equivalently, for fixed ambient vectors $v,w$,
$g_{e^rx}(v,w)=e^{-2r}g_x(v,w)$.
Therefore the product metric coefficients are
\[
 \begin{split}
 g_{e^r\sigma}(e^r\sigma,e^r\sigma)&=N,\\
 g_{e^r\sigma}(e^r\sigma,e^r\sigma_a)&=0,\\
 g_{e^r\sigma}(e^r\sigma_a,e^r\sigma_b)&=k_{ab}.
 \end{split}
\]
These identities prove \eqref{eq:product}.
Moreover, $r=-\phi/N$, and the displayed inverse shows that
$\Gamma$ acts trivially on $r$ and by its original action on
$\Sigma$. Thus the product is equivariant and descends to
$X=\R\times M$. Finally, the inverse metric and determinant
of the block matrix $\operatorname{diag}(N,k_{ab})$ give
$|\dd r|^2=N^{-1}$ and
$\dd\vol_g=\sqrt N\,\dd r\,\dd\vol_k$, while
$d\Phi(\partial_r)=H$ gives the remaining radial formulas.
\end{proof}

\section{Double forms and Akagawa's identity}\label{sec:double}

The cone has two structures that will be used separately: its flat
affine connection and its Hessian metric. The flat connection defines
the two differentials below, while the metric defines their adjoints.

\subsection{Double forms, exterior operators, and inner products}

Let $(Y,D)$ be a flat affine manifold of dimension $N$. A double form
of bidegree $(p,q)$ is a smooth section of
\[
 \Lambda^pT^*Y\otimes\Lambda^qT^*Y.
\]
We denote the space of these sections by $\mathcal A^{p,q}(Y)$.
In affine coordinates $x^1,\ldots,x^N$, write
\[
 \omega=\frac1{p!\,q!}
 \omega_{i_1\ldots i_p\,;\,j_1\ldots j_q}
 \dd x^{i_1}\wedge\cdots\wedge\dd x^{i_p}
 \otimes
 \dd x^{j_1}\wedge\cdots\wedge\dd x^{j_q}.
\]
Here and below repeated indices are summed from $1$ to $N$ unless
otherwise indicated. The components are alternating separately in
the indices on the two sides of the semicolon; no symmetry between
the two groups is imposed.

Exterior multiplication takes place independently in the two
factors. In particular,
\[
 (\xi\otimes\eta)\wedge(\xi'\otimes\eta')
 =(\xi\wedge\xi')\otimes(\eta\wedge\eta')
\]
for decomposable double forms. There is no sign from passing a form
in the first factor past a form in the second factor. This is the
convention of \cite[Definition 2.1.2]{Akagawa}. For a covector $\xi$,
let $\varepsilon(\xi)$ and $\bar\varepsilon(\xi)$ denote wedge
multiplication in the first and second factors. For a vector $Z$,
let $\iota(Z)$ and $\bar\iota(Z)$ denote contraction in the
respective factors. Our contraction convention is
\[
 (\iota(Z)\omega)_{i_2\ldots i_p;J}
 =Z^a\omega_{a i_2\ldots i_p;J},\qquad
 (\bar\iota(Z)\omega)_{I;j_2\ldots j_q}
 =Z^a\omega_{I;a j_2\ldots j_q}.
\]
Thus contraction inserts the vector in the first slot of the
chosen factor.

Write $e_i=\varepsilon(\dd x^i)$,
$\bar e_i=\bar\varepsilon(\dd x^i)$,
$i_i=\iota(\partial_i)$, and
$\bar i_i=\bar\iota(\partial_i)$. In each factor the usual
exterior-algebra relations hold:
\[
 i_i e_j+e_j i_i=\delta_{ij}\id,\qquad
 \bar i_i\bar e_j+\bar e_j\bar i_i=\delta_{ij}\id.
\]
Two wedge operators, or two contraction operators, anticommute
within the same factor. All operators acting on different factors
commute.

Put $D_i=D_{\partial_i}$. In an affine coordinate frame, $D_i$
differentiates the components of the entire double form ordinarily.
Define
\begin{equation}\label{eq:double-differentials}
 \partial=\sum_i e_iD_i,\qquad
 \bdel=\sum_i\bar e_iD_i.
\end{equation}
For example,
\[
 (\partial\omega)_{i_0\ldots i_p;J}
 =\sum_{r=0}^p(-1)^r
   \partial_{i_r}\omega_{i_0\ldots\widehat{i_r}\ldots i_p;J},
\]
and $\bdel$ has the identical formula in the second group of
indices. These definitions are independent of the affine chart.
Since affine derivatives commute and the two exterior factors
commute, they give
\begin{equation}\label{eq:double-relations}
 \partial^2=\bdel^2=0,\qquad
 \partial\bdel=\bdel\partial.
\end{equation}
The last equality has no minus sign. In particular, the two factors
are not being combined into a single exterior algebra with the
usual total-degree sign convention.

For fixed $p$, the first exterior factor is a flat coefficient
bundle. More precisely,
\begin{equation}\label{eq:twisted-interpretation}
 (\mathcal A^{p,\bullet}(Y),\bdel)
 =\bigl(\Omega^\bullet(Y;\Lambda^p(T^*Y,D)),
       \dd_{\Lambda^p(T^*Y,D)}\bigr).
\end{equation}
To check this description, choose any local parallel frame $s_A$
of the first exterior factor and write
$\omega=\sum_A s_A\otimes\omega^A$. Then
\[
 \bdel\omega=\sum_A s_A\otimes\dd\omega^A.
\]
The differential on the right requires only the connection on the
coefficient bundle; the $\omega^A$ are ordinary differential forms.

Suppose now that $g=D\dd\phi$ is positive definite, and write
$g_{ij}=\phi_{ij}$ and $(g^{ij})=(g_{ij})^{-1}$. The metric on
$T^*Y$ is $g^{-1}$. We use its determinant metric on exterior
powers and the tensor-product metric on double forms. Explicitly,
\[
 \langle\omega,\eta\rangle_g
 =\frac1{p!\,q!}
 \omega_{I;J}\eta_{K;L}
 \left(\prod_{a=1}^p g^{i_a k_a}\right)
 \left(\prod_{b=1}^q g^{j_b l_b}\right).
\]
For example, on a double $(1,1)$-form this is
$g^{ik}g^{j\ell}\omega_{i;j}\eta_{k;\ell}$.
The $L^2$ inner product is defined with the Riemannian density:
\[
 (\omega,\eta)_Y
 =\int_Y\langle\omega,\eta\rangle_g\,
        \sqrt{\det(g_{ij})}\,\dd x^1\cdots\dd x^N.
\]
The notation $\mathcal A_c^{p,q}(Y)$ means compact support.

Let $\delta$ and $\bdelt$ be the formal adjoints of $\partial$
and $\bdel$ for these inner products. They are determined by
integration by parts on compactly supported smooth test forms.
Define the local differential operators
\[
 \Box=\partial\delta+\delta\partial,\qquad
 \bBox=\bdel\bdelt+\bdelt\bdel.
\]

\subsection{The Hessian identities}

Regard $g$ as the double $(1,1)$-form
$g_{ij}\,\dd x^i\otimes\dd x^j$, and let $L$ be wedge
multiplication by this double form. Its pointwise adjoint is
\[
 \Lambda=g^{ij}i_i\bar i_j.
\]
Indeed, $e_i^*=g^{ia}i_a$ and
$\bar e_j^*=g^{jb}\bar i_b$, so
$L^*=g_{ij}g^{ia}g^{jb}i_a\bar i_b=g^{ab}i_a\bar i_b$.
In components,
\[
 (\Lambda\omega)_{I;J}=g^{ab}\omega_{aI;bJ}.
\]
Thus $\Lambda$ traces one covariant index from each factor. For a
$(1,1)$-form, $\Lambda\omega=g^{ij}\omega_{i;j}$.

The first Koszul form and its metric dual are
\[
 \alpha=\tfrac12\dd\log\det(g_{ij}),\qquad
 X_\alpha=\alpha^{\sharp_g}.
\]
An affine coordinate change multiplies $\det(g_{ij})$ by a positive
constant, so its logarithmic differential is well defined. The
local Hessian K\"ahler identities are
\begin{equation}\label{eq:Kahler}
 \Lambda\partial+\partial\Lambda
   =-\bdelt+\bar\iota(X_\alpha),\qquad
 \Lambda\bdel+\bdel\Lambda
   =-\delta+\iota(X_\alpha).
\end{equation}
These are \cite[Theorem 2.3.6]{Akagawa}. In Appendix
\ref{app:Hessian}, we prove \eqref{eq:Kahler} directly in coordinates.

For the Cheng--Yau potential, its Monge--Amp\`ere equation gives
$\frac12\dd\log\det(D^2\phi)=\dd\phi$. The radial identity
\eqref{eq:Euler-gradient} then gives
\begin{equation}\label{eq:alpha}
 \alpha=\dd\phi,\qquad X_\alpha=-H.
\end{equation}
The following calculation explains how the Euler field introduces
the exterior degrees.

\begin{lemma}\label{lem:Cartan}
On double forms of bidegree $(p,q)$ on $\C$,
\begin{equation}\label{eq:Cartan}
 \partial\iota(H)+\iota(H)\partial=D_H+p\id,
 \qquad
 \bdel\bar\iota(H)+\bar\iota(H)\bdel=D_H+q\id.
\end{equation}
\end{lemma}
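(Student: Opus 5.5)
The plan is a direct computation in affine coordinates $x^1,\ldots,x^N$ on $V$, in which $H=x^a\partial_a$, $D_i$ acts on components by ordinary differentiation, and $\partial=e_iD_i$. Since $H$ has non-constant coefficients, $\iota(H)=x^a i_a$ does not commute with $D_i$. Its commutator is
\[
 D_i\,\iota(H)-\iota(H)\,D_i=\iota(D_iH)=i_i ,
\]
because $\partial_i x^a=\delta^a_i$ and the operators $i_a$ have constant coefficients in affine coordinates. This single extra term is what produces the degree $p$. The whole identity is a pointwise, coordinate-independent statement, so it suffices to verify it in one affine chart.

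The steps, in order, are these.

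\emph{First}, expand
\[
 \partial\iota(H)=e_iD_i\,x^a i_a
 =e_i\bigl(\delta^a_i\, i_a + x^a i_a D_i\bigr)
 =e_ii_i+x^a e_ii_aD_i .
\]

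\emph{Second}, write
\[
 \iota(H)\partial=x^a i_a e_i D_i .
\]

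\emph{Third}, add the two expressions and use the anticommutation relation $i_ae_i+e_ii_a=\delta_{ai}\id$. This gives
\[
 x^a(e_ii_a+i_ae_i)D_i=x^aD_a=D_H .
\]

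\emph{Fourth}, observe that the remaining term $\sum_i e_ii_i$ is the number operator on the first factor. It acts as multiplication by $p$ on $\Lambda^p$; to see this, check it on $\dd x^{i_1}\wedge\cdots\wedge\dd x^{i_p}$, where $e_ii_i$ fixes the monomial when $i\in\{i_1,\ldots,i_p\}$ and kills it otherwise.

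The second factor is handled identically. The operators $\bar e_i,\bar i_a$ commute with everything acting on the first factor and satisfy the same relations, so the same computation yields $D_H+q\id$.

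The only place needing care is the meaning of $D_H$ here. It is the covariant derivative of the double form along $H$ with respect to the flat connection, namely $x^aD_a$ applied to the components, and not a Lie derivative. The Cartan-type formula for the Lie derivative would instead give $\mathcal L_H$ on the first factor, and $\mathcal L_H$ differs from $D_H$ precisely by the $p$ coming from $DH=\id$.

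I expect no real obstacle. The one point to verify is that the sign conventions of the paper (contraction into the first slot, with no signs between the two factors) make $e_ii_i$ equal to $+p$ and not some other sign. Checking this on a single monomial settles it.
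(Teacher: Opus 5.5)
Your proposal is correct and is essentially the paper's argument: the commutator $D_i\iota(H)-\iota(H)D_i=i_i$ produces the number operator $\sum_i e_ii_i=p\,\id$, the anticommutation relations reduce the remaining terms to $x^aD_a=D_H$, and the second factor is handled identically. Your sign check on a monomial, giving $+p$, and your distinction between $D_H$ and $\mathcal L_H$ are both consistent with the paper's conventions.
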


\begin{proof}
Since $D_iH=\partial_i$, differentiation of contraction gives
\[
 D_i(\iota(H)\omega)=i_i\omega+\iota(H)D_i\omega.
\]
Therefore
\begin{align*}
 (\partial\iota(H)+\iota(H)\partial)\omega
 &=\sum_i e_i i_i\omega
   +\sum_i(e_i\iota(H)+\iota(H)e_i)D_i\omega\\
 &=\sum_i e_i i_i\omega+D_H\omega.
\end{align*}
The second equality uses
$e_i\iota(H)+\iota(H)e_i=x^i\id$.
On an elementary exterior monomial of first degree $p$,
the operator $\sum_i e_i i_i$ counts its $p$ indices:
a term vanishes unless $\dd x^i$ occurs, and contraction
followed by reinsertion gives that monomial for each such
index. Thus $\sum_i e_i i_i=p\id$.
The identical calculation in the second factor gives
$\sum_i\bar e_i\bar i_i=q\id$ and proves the second identity.
These formulas also appear in
\cite[Corollary~3.3.3]{Akagawa}.
\end{proof}

The Lie derivative in the $H$-direction of the entire covariant tensor is
\[
 (\mathcal L_H\omega)_{I;J}
 =H^a\partial_a\omega_{I;J}+(p+q)\omega_{I;J},
\]
because $\partial_iH^a=\delta_i^a$ contributes once for every
covariant slot. Hence
$\mathcal L_H\omega=D_H\omega+(p+q)\omega$.
In particular, dilation invariance means
$D_H\omega=-(p+q)\omega$.

\begin{proposition}[Akagawa's cone identity]\label{prop:Akagawa}
On double forms of bidegree $(p,q)$ on the Cheng--Yau cone,
\begin{equation}\label{eq:Akagawa}
 \bBox=\Box+(p-q)\id.
\end{equation}
\end{proposition}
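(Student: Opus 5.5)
From the Hessian Kähler identities \eqref{eq:Kahler}, with $X_\alpha=-H$ by \eqref{eq:alpha}, we have
\[
 -\bdelt=\Lambda\partial+\partial\Lambda+\bar\iota(H),\qquad
 -\delta=\Lambda\bdel+\bdel\Lambda+\iota(H).
\]
The plan is to compute $\bBox=\bdel\bdelt+\bdelt\bdel$ by substituting the first formula, and $\Box=\partial\delta+\delta\partial$ by substituting the second. We then compare the results using \eqref{eq:double-relations} together with the Cartan-type identities of Lemma \ref{lem:Cartan}.

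\emph{Computing $\bBox$.} Substituting gives
\[
 -\bBox=\bdel\Lambda\partial+\bdel\partial\Lambda+\Lambda\partial\bdel+\partial\Lambda\bdel
 +\bigl(\bdel\bar\iota(H)+\bar\iota(H)\bdel\bigr).
\]
The last bracket equals $D_H+q\id$ on bidegree $(p,q)$ by \eqref{eq:Cartan}. The subtlety is that $\bdel$ maps $(p,q)$ to $(p,q+1)$, so in the term $\bar\iota(H)\bdel$ the operator $\bar\iota(H)$ acts on forms of second degree $q+1$. The identity \eqref{eq:Cartan} is nevertheless an identity of operators on $(p,q)$-forms, so the bracket is $D_H+q$ with $q$ the input bidegree.

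\emph{Computing $\Box$.} Similarly,
\[
 -\Box=\partial\Lambda\bdel+\partial\bdel\Lambda+\Lambda\bdel\partial+\bdel\Lambda\partial+\bigl(\partial\iota(H)+\iota(H)\partial\bigr),
\]
and the last bracket equals $D_H+p\id$.

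\emph{Comparison.} Using $\partial\bdel=\bdel\partial$, the four non-Euler terms in the two expansions coincide. Indeed $\bdel\Lambda\partial$, $\partial\Lambda\bdel$, $\bdel\partial\Lambda=\partial\bdel\Lambda$ and $\Lambda\partial\bdel=\Lambda\bdel\partial$ occur in both. Subtracting,
\[
 -\bBox+\Box=(D_H+q)-(D_H+p),
\]
so $\bBox=\Box+(p-q)\id$. The $D_H$ terms cancel, so no dilation invariance is needed.

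\emph{Main obstacle.} The delicate point is to check that the signs and adjoint conventions in \eqref{eq:Kahler} are exactly as stated for the non-graded double-form convention. In particular, $\partial$ and $\bdel$ commute rather than anticommute, and this is what makes the cross terms match identically rather than up to sign. I would verify this on a $(0,0)$-form as a sanity check. For $f$ a function, $\Box f=\delta\partial f$ and $\bBox f=\bdelt\bdel f$ are both the Laplacian-type operator $-g^{ij}\partial_{ij}f+(\text{first order})$. The first-order parts must agree, and the formula predicts equality since $p=q=0$. Beyond that, everything reduces to the Appendix identities, which we take as given.
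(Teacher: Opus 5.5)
Your proof is correct and is essentially the paper's own argument. You solve the Hessian K\"ahler identities for $\delta$ and $\bdelt$ with $X_\alpha=-H$, substitute into both Laplacians, match the four $\Lambda$-terms using $\partial\bdel=\bdel\partial$, and evaluate the remaining Euler-field anticommutators with Lemma~\ref{lem:Cartan}; you also correctly observe that the $D_H$ terms cancel without any appeal to dilation invariance.
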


\begin{proof}
This is the local identity established in
\cite[proof of Main Theorem 3, p.~86]{Akagawa}. Put
$I=\iota(X_\alpha)$ and $\bar I=\bar\iota(X_\alpha)$.
Solving \eqref{eq:Kahler} for the adjoints gives
\[
 \delta=-\Lambda\bdel-\bdel\Lambda+I,
 \qquad
 \bdelt=-\Lambda\partial-\partial\Lambda+\bar I.
\]
Substitution into the two Laplacians yields
\begin{align*}
 \Box={}&-\partial\Lambda\bdel-\partial\bdel\Lambda
          -\Lambda\bdel\partial-\bdel\Lambda\partial
          +\partial I+I\partial,\\
 \bBox={}&-\bdel\Lambda\partial-\bdel\partial\Lambda
          -\Lambda\partial\bdel-\partial\Lambda\bdel
          +\bdel\bar I+\bar I\bdel.
\end{align*}
The four terms involving $\Lambda$ agree between the two lines:
the only interchanges needed are
$\bdel\partial=\partial\bdel$, with $\Lambda$ left in its
original position. Their difference is therefore
\[
 \bBox-\Box
 =-(\partial I+I\partial)
   +(\bdel\bar I+\bar I\bdel).
\]
By \eqref{eq:alpha}, $I=-\iota(H)$ and
$\bar I=-\bar\iota(H)$. Applying Lemma \ref{lem:Cartan} gives
\[
 \bBox-\Box=(D_H+p)-(D_H+q)=(p-q)\id.
\]
This proves the asserted identity.
\end{proof}

\begin{proposition}\label{prop:quotient-identity}
The four operators $\partial,\bdel,\delta,\bdelt$ descend to
$X=\C/\Gamma$, and \eqref{eq:Akagawa} holds on $X$.
\end{proposition}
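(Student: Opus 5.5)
The plan is to show that each of the four operators commutes with the action of every $\gamma\in\Gamma$ (acting on double forms by pullback $\gamma^*$). Then each operator maps $\Gamma$-invariant double forms to $\Gamma$-invariant double forms. Since $\Gamma$ acts freely and properly discontinuously on $\C$, the invariant forms on $\C$ are exactly the double forms on $X=\C/\Gamma$, where $X$ carries the induced flat affine structure. Because \eqref{eq:Akagawa} is a pointwise identity between local differential operators, it then holds on $X$: for $\omega$ on $X$, apply both sides to its lift, and the identity on $\C$ from Proposition \ref{prop:Akagawa} gives it on the quotient.

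\textbf{Commutation for $\partial$ and $\bdel$.} Each $\gamma\in\Gamma\subset\SL(V)$ is a linear map, so it is affine for $D$; in particular $\gamma^*D=D$. Pullback by $\gamma$ acts by $\gamma^{\mathsf T}$ simultaneously on both exterior factors, and it commutes with wedge multiplication in each factor separately. The definitions \eqref{eq:double-differentials} are chart-independent, and $\gamma$ maps affine charts to affine charts. Hence $\gamma^*\partial=\partial\gamma^*$ and $\gamma^*\bdel=\bdel\gamma^*$. In coordinates this is just the chain rule applied to constant-coefficient linear substitutions.

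\textbf{Commutation for $\delta$ and $\bdelt$.} By Lemma \ref{lem:potential}, $\gamma^*g=g$, and $\gamma$ preserves the Riemannian density since it is an isometry. Therefore $\gamma^*$ is an isometry of the pointwise inner product on double forms and preserves $(\cdot,\cdot)_Y$ on compactly supported forms. Now take $\omega,\eta$ compactly supported. Then
\[
(\delta\gamma^*\omega,\eta)=(\gamma^*\omega,\partial\eta)=(\omega,(\gamma^{-1})^*\partial\eta)=(\omega,\partial(\gamma^{-1})^*\eta)=(\gamma^*\delta\omega,\eta).
\]
Since $\delta$ is a local differential operator determined by this identity, we get $\delta\gamma^*=\gamma^*\delta$, and the same argument works for $\bdelt$. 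Alternatively, one can avoid integrals: the formula $\delta=-\Lambda\bdel-\bdel\Lambda+\iota(X_\alpha)$ from \eqref{eq:Kahler} expresses $\delta$ through $\bdel$, $\Lambda$ and $X_\alpha=-H$. Here $\Lambda$ is built from $g$ and is therefore invariant, and $H$ is invariant under linear maps because $\gamma_*H=H$.

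\textbf{Main obstacle.} The only subtle point is that the adjoints descend as the formal adjoints on $X$ with respect to the quotient metric, rather than merely as some operators. This is local: on a small open set evenly covered by $\C\to X$, the covering map is an isometric affine identification, so the formal adjoint computed downstairs agrees with the descended operator. The constant $(p-q)$ and the operators $\Box,\bBox$ are then literally the same local expressions, which completes the argument.
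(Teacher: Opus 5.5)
Your proposal is correct and follows the same route as the paper. Elements of $\Gamma$ are affine for $D$ and are $g$-isometries preserving the density, so pullback commutes with $\partial,\bdel$ and, via the integration-by-parts characterization of the formal adjoints, with $\delta,\bdelt$; the local identity \eqref{eq:Akagawa} then descends. Your alternative argument via \eqref{eq:Kahler} and $\gamma_*H=H$, and your remark that the descended adjoints are the formal adjoints on $X$ by locality, are correct additions that the paper leaves implicit.
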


\begin{proof}
Each element of $\Gamma$ is affine for $D$ and preserves $g$.
Its pullback therefore commutes with both differentials and
preserves the pointwise tensor metrics and the Riemannian density.
It also commutes with their formal adjoints: the defining
integration-by-parts identity on a coordinate neighborhood is
unchanged by this pullback. 
\end{proof}

The next section rewrites \eqref{eq:Akagawa} as an identity
of operators on $M$.

\section{A Bochner identity}\label{sec:radial}

The aim of this section is to express the two sides of Akagawa's
cone identity as operators on bundles over $M$. In particular, we define the
operators in the Bochner identity \eqref{eq:Bochner-intro} and compute the action
of $\bBox$ and $\Box$ on dilation invariant forms.

\subsection{The coefficient bundle and its metric}\label{sec:bundle}

The identification $T\C=\C\times V$ identifies its flat
cotangent bundle with $\C\times V^*$. The action on the fiber is
$\gamma^{-\mathsf T}$. Its $p$th exterior power therefore
descends to a flat bundle $\mathcal F_p$ on $X=\C/\Gamma$.
Radial projection gives a flat bundle isomorphism
\begin{equation}\label{eq:coefficient-identification}
 \mathcal F_p\cong\pi_M^*F_p,
 \qquad \pi_M:\R\times M\longrightarrow M.
\end{equation}
To describe it, write a point of the cone as $e^r\sigma(y)$,
where $\sigma(y)\in\Sigma$. A covector with affine components
$\lambda_i$ at $\sigma(y)$ is identified with the covector
having the same components $\lambda_i$ at $e^r\sigma(y)$.
This is flat parallel transport in the ambient space $V$.
It is $\Gamma$-equivariant, so the description gives the stated
isomorphism on the quotient. In particular, the connection in the radial direction is ordinary differentiation.

Under \eqref{eq:coefficient-identification}, $\bdel$ is the
exterior covariant derivative of $\pi_M^*F_p$, as explained in
\eqref{eq:twisted-interpretation}. In a local flat coefficient
frame $s_A$, it is characterized by
\[
 \bdel\Bigl(\sum_A s_A\otimes\omega^A\Bigr)
       =\sum_A s_A\otimes\dd\omega^A,
\]
where $\omega^A$ are ordinary differential forms on $\R\times M$.

The inverse Cheng--Yau metric \(g^{-1}\) induces a fiber metric on
\(\mathcal F_p=\Lambda^pT^*X\). Under the identification \eqref{eq:coefficient-identification}, the fiber of \(\mathcal F_p\) at \((0,y)\) is identified with \((F_p)_y\).
We denote by \(H_{p,y}\) the inner product transported to \((F_p)_y\)
through this identification. This defines a fiber metric \(H_p\)
on \(F_p\).  

\begin{lemma}\label{lem:scaling}
Under \eqref{eq:coefficient-identification}, the fiber metric on
$\mathcal F_p$ over $(r,y)$ is
\begin{equation}\label{eq:fiber-scaling}
 e^{2pr}H_p(y).
\end{equation}
\end{lemma}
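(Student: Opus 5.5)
The plan is to compare the inverse metric $g^{-1}$ at the two points $e^r\sigma(y)$ and $\sigma(y)$, using the dilation behaviour recorded in Lemma \ref{lem:potential} and in the proof of Proposition \ref{prop:product}. The identification \eqref{eq:coefficient-identification} keeps affine components $\lambda_i$ fixed, so everything reduces to how the components $g^{ij}$ change along the ray.

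First, differentiate \eqref{eq:homogeneous} twice in affine coordinates. Since $\phi(tx)=\phi(x)-N\log t$, we get
\[
 t^2\phi_{ij}(tx)=\phi_{ij}(x),
\]
that is, $g_{ij}(e^rx)=e^{-2r}g_{ij}(x)$. This is the relation $g_{e^rx}(v,w)=e^{-2r}g_x(v,w)$ already noted in the proof of Proposition \ref{prop:product}. Inverting the matrix gives
\[
 g^{ij}(e^rx)=e^{2r}g^{ij}(x).
\]

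Next, insert this into the explicit formula for the induced metric on $\Lambda^p$ of the cotangent space. For a $p$-covector with fixed components $\lambda_{I}$, the norm is
\[
 \frac1{p!}\,\lambda_{I}\lambda_{K}\prod_{a=1}^p g^{i_ak_a},
\]
which contains exactly $p$ factors of $g^{-1}$. Evaluated at $e^r\sigma(y)$, it is therefore $e^{2pr}$ times its value at $\sigma(y)$. The value at $\sigma(y)$, which is the point $(0,y)$, is $H_p(y)$ by definition. Polarization extends this from norms to inner products.

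Finally, check that the statement is well defined on the quotient. The identification is $\Gamma$-equivariant, and $\Gamma$ preserves $g$ and acts trivially on $r$. Hence the formula descends from $\R\times\Sigma$ to $X=\R\times M$.

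There is no real obstacle. The only point needing care is the direction of the scaling: the metric on the coefficient bundle comes from $g^{-1}$, not $g$, so the factor is $e^{+2pr}$ rather than $e^{-2pr}$. The fixed-component identification is exactly the one under which this computation applies.
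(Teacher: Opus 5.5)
Your proposal is correct and follows essentially the same route as the paper. Both differentiate $\phi(tx)=\phi(x)-N\log t$ twice to obtain $g^{ij}(e^rx)=e^{2r}g^{ij}(x)$, then use that the induced metric on $\Lambda^p$ contains exactly $p$ factors of $g^{-1}$. The only cosmetic difference is that the paper counts these factors through the determinant formula on decomposable $p$-covectors and extends by bilinearity, whereas you read them off the component formula directly.
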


\begin{proof}
Fix $t>0$ and differentiate
$\phi(tx)=\phi(x)-N\log t$ in the affine variables $x$.
The first and second derivatives give
\[
 t\phi_i(tx)=\phi_i(x),\qquad
 t^2\phi_{ij}(tx)=\phi_{ij}(x).
\]
Consequently, for the matrices of the metric and its inverse,
\begin{equation}\label{eq:metric-scaling-components}
 g_{ij}(e^rx)=e^{-2r}g_{ij}(x),\qquad
 g^{ij}(e^rx)=e^{2r}g^{ij}(x).
\end{equation}
For covectors $\lambda,\mu$ we then have
\[
 \langle\lambda,\mu\rangle_{e^rx}
 =\sum_{i,j}g^{ij}(e^rx)\lambda_i\mu_j
 =e^{2r}\langle\lambda,\mu\rangle_x.
\]
The exterior metric is the determinant metric:
\[
 \left\langle
 \lambda_1\wedge\cdots\wedge\lambda_p,\,
 \mu_1\wedge\cdots\wedge\mu_p
 \right\rangle
 =\det\bigl(\langle\lambda_a,\mu_b\rangle\bigr)_{a,b=1}^p.
\]
Each entry of this $p\times p$ matrix acquires the factor
$e^{2r}$, and its determinant acquires $e^{2pr}$.
Decomposable exterior tensors span the fiber, so bilinearity
proves \eqref{eq:fiber-scaling}.
\end{proof}

We regard double \((p,q)\)-forms on \(X\) as
ordinary \(q\)-forms on \(\mathbb R\times M\) with coefficients
in \(\pi_M^*F_p\):
\[
\Lambda^pT^*X\otimes\Lambda^qT^*X
\cong
\pi_M^*F_p\otimes\Lambda^qT^*(\mathbb R\times M).
\]
Their pointwise metric is therefore the tensor product of the metric in Lemma \ref{lem:scaling} with the metric on ordinary \(q\)-forms induced by the product Riemannian metric \(N\,dr^2+k\).
In particular, for \(s\in(F_p)_y\) and
\(\alpha\in\Lambda^qT^*_{(r,y)}(\mathbb R\times M)\),
\[
|s\otimes\alpha|_g^2
=
e^{2pr}|s|_{H_p}^2\,|\alpha|_{N\,dr^2+k}^2.
\]

\noindent On \(\Omega^q(M;F_p)\), we use the pointwise metric
\(H_p\otimes\Lambda^q k^{-1}\) and the volume density
\(d\mathrm{vol}_k\). Let $\dd_p$ be the flat exterior derivative,
let $\dd_p^*$ be its formal adjoint, and put
\begin{equation}\label{eq:Q}
 \cQ_{p,q}(a)=\|\dd_pa\|_M^2+\|\dd_p^*a\|_M^2,\qquad
 \Delta_{p,q}=\dd_p\dd_p^*+\dd_p^*\dd_p.
\end{equation}
Thus $\cQ_{p,q}(a)=(\Delta_{p,q}a,a)_M$.

\subsection{The differential and its formal adjoint on the product}\label{sec:product}

Every $\mathcal F_p$-valued $q$-form on $X$ has a unique
decomposition
\[
 \omega=a(r)+\dd r\wedge b(r),
\]
where $a(r)\in\Omega^q(M;F_p)$ and
$b(r)\in\Omega^{q-1}(M;F_p)$. The form is horizontal if $b=0$.
Wedge multiplication by $\dd r$ in this decomposition acts in
the second exterior factor. By definition of the metric (see \eqref{eq:radial-data}), together with
$\dd\vol_g=\sqrt N\,\dd r\,\dd\vol_k$, give
\begin{equation}\label{eq:product-inner}
 (a+\dd r\wedge b,c+\dd r\wedge d)_X
 =\sqrt N\int_\R e^{2pr}
   \left((a,c)_M+\frac1N(b,d)_M\right)\dd r
\end{equation}
for compactly supported families.

\begin{lemma}\label{lem:product-operators}
In this decomposition,
\begin{align}
 \bdel(a+\dd r\wedge b)
 &=\dd_pa+\dd r\wedge(\partial_ra-\dd_pb),
 \label{eq:product-d}\\
 \bdelt(a+\dd r\wedge b)
 &=\dd_p^*a-\frac1N(\partial_r+2p)b
                     -\dd r\wedge\dd_p^*b.
 \label{eq:product-adjoint}
\end{align}
\end{lemma}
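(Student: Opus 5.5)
The differential formula \eqref{eq:product-d} comes first, since it only involves the flat connection. By \eqref{eq:twisted-interpretation} and the identification \eqref{eq:coefficient-identification}, $\bdel$ is the exterior covariant derivative of $\pi_M^*F_p$ on $\R\times M$. The connection in the $r$-direction is ordinary differentiation, and along $M$ it is the flat connection of $F_p$.

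Choose a local flat frame $s_A$ pulled back from $M$ and write $a(r)=\sum_A s_A\otimes a^A(r)$, and similarly for $b$. Split the ordinary exterior derivative on $\R\times M$ as $\dd=\dd_M+\dd r\wedge\partial_r$. Then
\[
 \dd(a^A+\dd r\wedge b^A)=\dd_Ma^A+\dd r\wedge\partial_ra^A-\dd r\wedge\dd_Mb^A .
\]
The sign on the last term comes from moving $\dd_M$ past $\dd r$. Reassembling the frame gives \eqref{eq:product-d}. The sign conventions here are unaffected by the double-form convention, because $\dd r\wedge$ acts in the second factor only, exactly as in an ordinary twisted form calculus.

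For \eqref{eq:product-adjoint}, I will use the defining identity $(\bdel\omega,\eta)_X=(\omega,\bdelt\eta)_X$ for compactly supported $\omega=c+\dd r\wedge d$ and $\eta=a+\dd r\wedge b$. The inner product is computed with \eqref{eq:product-inner}. By \eqref{eq:product-d}, $\bdel\omega=\dd_pc+\dd r\wedge(\partial_rc-\dd_pd)$, so
\[
 (\bdel\omega,\eta)_X=\sqrt N\int_\R e^{2pr}\Bigl((\dd_pc,a)_M+\tfrac1N(\partial_rc,b)_M-\tfrac1N(\dd_pd,b)_M\Bigr)\dd r .
\]
The three terms are handled separately:
\begin{itemize}
 \item The first term becomes $(c,\dd_p^*a)_M$ by the definition of $\dd_p^*$ on $M$. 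This is legitimate because the weight $e^{2pr}$ is constant on each slice.
 \item The third term becomes $-\tfrac1N(d,\dd_p^*b)_M$ in the same way.
 \item The second term needs an integration by parts in $r$. The weight is the key point: by Lemma \ref{lem:scaling} the metric $H_p$ on $F_p$ does not depend on $r$, but the weight $e^{2pr}$ does. Using
 \[
 \int e^{2pr}(\partial_rc,b)_M\,\dd r=-\int e^{2pr}\bigl(c,(\partial_r+2p)b\bigr)_M\,\dd r ,
 \]
 this term becomes $-\tfrac1N\bigl(c,(\partial_r+2p)b\bigr)_M$.
\end{itemize}

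Finally, I read off $\bdelt\eta$ by matching against \eqref{eq:product-inner} again. Its horizontal part $a'$ is paired with $c$ with weight $1$, and its $\dd r$-part $b'$ is paired with $d$ with weight $1/N$. Matching the $c$-terms gives $a'=\dd_p^*a-\tfrac1N(\partial_r+2p)b$. Matching the $d$-terms gives $\tfrac1N(d,b')_M=-\tfrac1N(d,\dd_p^*b)_M$, so $b'=-\dd_p^*b$. This is exactly \eqref{eq:product-adjoint}.

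The main care point is the weighting: the $1/N$ from $|\dd r|^2$ must cancel in the $\dd r$-component but survive in front of $(\partial_r+2p)b$. I also need to confirm that $\partial_r$ on $\pi_M^*F_p$-valued forms commutes with $\dd_p$ and $\dd_p^*$. It does, because $H_p$ and $k$ are $r$-independent and the connection is trivial in $r$.
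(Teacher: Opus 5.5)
Your proposal is correct and follows the paper's proof essentially step for step. You compute $\bdel$ in a flat frame (including the sign $\bdel(\dd r\wedge b)=-\dd r\wedge\dd_pb$), pair against a compactly supported test form $c+\dd r\wedge d$ via \eqref{eq:product-inner}, adjoin $\dd_p$ slicewise, integrate by parts in $r$ against the weight $e^{2pr}$, and match the horizontal and $\dd r$-components, exactly as the paper does. Your closing check that $\partial_r$ commutes with $\dd_p$ and $\dd_p^*$ is true but not needed here; only the $r$-independence of $H_p$, $k$ and $\dd\vol_k$ enters the radial integration by parts.
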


\begin{proof}
In a flat coefficient frame, $\bdel$ is ordinary exterior
differentiation of the second-factor forms. Thus, 
\[
 \bdel a=\dd_pa+\dd r\wedge\partial_ra,\qquad
 \bdel(\dd r\wedge b)=-\dd r\wedge\dd_pb,
\]
which proves \eqref{eq:product-d}.

To compute the adjoint, take a compactly supported test form
$c(r)+\dd r\wedge d(r)$ with $c(r)\in \Omega^{q-1}(M,F_{p})$ and $d(r)\in \Omega^{q-2}(M,F_{p})$. Its derivative paired
with $\omega=a+\dd r\wedge b$ is
\[
 \sqrt N\int_\R e^{2pr}
 \left((a,\dd_pc)_M+
       \frac1N(b,\partial_rc-\dd_pd)_M\right)\dd r.
\]
The terms involving $\dd_p$ become
$(\dd_p^*a,c)_M$ and $(-\dd_p^*b,d)_M$ by integration
on $M$. Compact support in $r$ gives
\[
 \int_\R e^{2pr}(b,\partial_rc)_M\,\dd r
 =-\int_\R e^{2pr}(\partial_rb+2pb,c)_M\,\dd r.
\]
The pairing is therefore
\[
 \sqrt N\int_\R e^{2pr}
 \left(
 \left(\dd_p^*a-\frac1N(\partial_r+2p)b,c\right)_M
 +\frac1N(-\dd_p^*b,d)_M
 \right)\dd r.
\]
Comparison with \eqref{eq:product-inner} proves
\eqref{eq:product-adjoint}.
\end{proof}

For a horizontal form $f(r)a(y)$, the formulas reduce to
\begin{equation}\label{eq:horizontal-operators}
 \bdel(fa)=f\,\dd_pa+f'\dd r\wedge a,\qquad
 \bdelt(fa)=f\,\dd_p^*a.
\end{equation}
We will need the Laplacian on such forms. Applying
\eqref{eq:product-adjoint} to the two components of
$\bdel(fa)$ gives
\begin{align*}
 \bdelt\bdel(fa)
 &=f\,\dd_p^*\dd_pa
   -\frac1N(f''+2pf')a-f'\dd r\wedge\dd_p^*a,\\
 \bdel\bdelt(fa)
 &=f\,\dd_p\dd_p^*a+f'\dd r\wedge\dd_p^*a.
\end{align*}
The radial exterior terms cancel, so
\begin{equation}\label{eq:horizontal-barBox}
 \bBox(fa)=f\,\Delta_{p,q}a-\frac1N(f''+2pf')a.
\end{equation}
This holds for arbitrary smooth functions $f$.

\subsection{A Bochner identity on the compact quotient}\label{sec:compact-Bochner}

We now define all the bundles and operators needed for the Bochner identity \eqref{eq:Bochner-intro}. The level hypersurface $\Sigma$ gives a section
$\sigma$ of $E$: at the point represented by $x\in\Sigma$,
its value is $x\in V$. The equivariance of $\Sigma$ makes this
section well defined on $M$. Let $D^E$ be the flat connection
on $E$. Since $D^E_Y\sigma$ is the derivative of the immersion
of $\Sigma$, it identifies $TM$ with a subbundle of $E$.
Moreover,
$\dd\phi(\sigma)=-N$ and $\dd\phi(D^E_Y\sigma)=0$, so
\[
 E=\R\sigma\oplus D^E\sigma(TM).
\]
In this subsection, a tangent vector $Y$ regarded as an element
of $E$ means $D^E_Y\sigma$.

\noindent The cone metric on the slice defines a fiber metric $G$ on $E$:
\begin{equation}\label{eq:E-splitting}
 G(\sigma,\sigma)=N,\qquad G(\sigma,Y)=0,\qquad
 G(Y,Z)=k(Y,Z).
\end{equation}
The metric on \(\Lambda^pE^*\) induced by \(G\) is precisely
the metric \(H_p\) defined in Section \ref{sec:bundle}.
Define $\zeta\in E^*$ by $\zeta(\sigma)=1$ and
$\zeta(TM)=0$. Then
\[
 \zeta=G(\sigma,\cdot)/N,\qquad
 \zeta^{\sharp_G}=\sigma/N,\qquad |\zeta|^2=N^{-1}.
\]
For any local frame $e_a$ of $TM$, let $\theta^a\in E^*$
annihilate $\sigma$ and be dual to $e_a$ on $TM$. When working on the cone, we use the same symbols
\(\zeta,\theta^a\) for their extensions by flat parallel
transport along the rays, so their affine components are
kept fixed. In a coordinate frame \(e_a=\partial_{y^a}\),
write \(\sigma_a=\partial_{y^a}\sigma\). The product
parametrization \(\Phi(r,y)=e^r\sigma(y)\) satisfies
\[
d\Phi=e^r\sigma\,dr+e^r\sum_b\sigma_b\,dy^b.
\]
Applying the defining duality relations gives
\[
\begin{aligned}
\Phi^*\zeta
&=e^r\left(\zeta(\sigma)\,dr
          +\sum_b\zeta(\sigma_b)\,dy^b\right)
 =e^r\,dr,\\
\Phi^*\theta^a
&=e^r\left(\theta^a(\sigma)\,dr
          +\sum_b\theta^a(\sigma_b)\,dy^b\right)
 =e^r\,dy^a.
\end{aligned}
\]
Thus, suppressing pullback by \(\Phi\), we write
\[
\zeta=e^r\,dr,
\qquad
\theta^a=e^r\,dy^a.
\]
In particular,
\(\theta^{a_1}\wedge\cdots\wedge\theta^{a_q}
=e^{qr}dy^{a_1}\wedge\cdots\wedge dy^{a_q}\).\\

\noindent For $0\leq p,q\leq N$, set
\[
 \mathcal B^{p,q}=\Lambda^pE^*\otimes\Lambda^qE^*,
\]
with the tensor metric induced by $G$ and the base measure
$\dd\vol_k$. The inclusion of covectors annihilating $\sigma$
gives an isometric bundle map
\begin{equation}\label{eq:horizontal-inclusion}
 J_q:\Lambda^pE^*\otimes\Lambda^qT^*M
                  \longrightarrow\mathcal B^{p,q}.
\end{equation}
It is the identity on the first factor. In a coordinate coframe,
\[
 J_q(\lambda\otimes\dd y^{a_1}\wedge\cdots\wedge\dd y^{a_q})
 =\lambda\otimes\theta^{a_1}\wedge\cdots\wedge\theta^{a_q}.
\]
Note that
\begin{equation}\label{eq:horizontal-decomposition}
 \Lambda^qE^*
 =J_q(\Lambda^qT^*M)
 \oplus\zeta\wedge J_{q-1}(\Lambda^{q-1}T^*M).
\end{equation}
Here the notation $J_q$ in the single exterior factor denotes
the same inclusion without the first-factor identity.
Thus $J_q^*$ discards the terms containing a second-factor
$\zeta$ and identifies the horizontal part with ordinary forms.
In particular, $J_q^*J_q=\id$.

\noindent Let $D^{\mathrm{tot}}$ be the connection on
$\mathcal B^{p,q}$ induced by $D^E$ on both factors, in other words
\[
 D^{\mathrm{tot}}_X(\lambda\otimes\eta)
 =(D_X\lambda)\otimes\eta+\lambda\otimes D_X\eta,
\]
with the induced exterior-power connections on $\lambda,\eta$.
In a local flat frame of $E$, this is ordinary differentiation
of all components.

\noindent Define
\begin{equation}\label{eq:angular-P}
 \begin{split}
 \mathsf P_{p,q}u
 &=\sum_{a=1}^n\varepsilon(\theta^a)
                         D^{\mathrm{tot}}_{e_a}u
       -(p+q)\varepsilon(\zeta)u,\\
 \mathsf P_{p,q}&:\Gamma(\mathcal B^{p,q})
                         \longrightarrow\Gamma(\mathcal B^{p+1,q}).
 \end{split}
\end{equation}

\noindent Let
$\mathsf P_{p,q}^*:\Gamma(\mathcal B^{p+1,q})
\to\Gamma(\mathcal B^{p,q})$
be its formal adjoint for the metrics induced by $G$ and the
volume $\dd\vol_k$, i.e.
\begin{equation}\label{eq:P-formal-adjoint}
 (\mathsf P_{p,q}u,v)_M=(u,\mathsf P_{p,q}^*v)_M
\end{equation}
for all smooth sections $u,v$. \\

\noindent For the adjoint calculation below, we will also use the following identity

\begin{lemma} The adjoint of exterior multiplication by $\zeta$ is the operator
\[
 \varepsilon(\zeta)^*=\iota(\sigma/N).
\]
\end{lemma}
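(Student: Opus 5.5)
The lemma is the fiberwise linear-algebra fact that, for any inner product $G$ on a vector space, the adjoint of $\varepsilon(\xi)$ on $\Lambda^\bullet E^*$ is contraction with the metric dual $\xi^{\sharp_G}$. Applied to $\xi=\zeta$, it gives $\varepsilon(\zeta)^*=\iota(\zeta^{\sharp_G})=\iota(\sigma/N)$ by the formula $\zeta^{\sharp_G}=\sigma/N$ recorded after \eqref{eq:E-splitting}. Since $\mathsf P_{p,q}$ uses $\varepsilon(\zeta)$ in the first factor, I would prove the statement on the first factor of $\mathcal B^{p,q}=\Lambda^pE^*\otimes\Lambda^qE^*$. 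The same argument applies verbatim to the second factor, and in either case the identity on the other factor passes through the tensor-product metric unchanged. The adjoint is purely pointwise, because $\varepsilon(\zeta)$ is zeroth order, so its $L^2$ adjoint for $\dd\vol_k$ is the fiberwise adjoint.

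To verify the fiberwise claim, I would work in the $G$-orthonormal basis adapted to \eqref{eq:E-splitting}. Take $f_0=\sigma/\sqrt N$, and complete it with a $k$-orthonormal frame $f_1,\dots,f_n$ of $TM\subset E$. Let $f^0,\dots,f^n$ be the dual coframe. Because $G(\sigma,TM)=0$, we get $\zeta=f^0/\sqrt N$. The induced determinant metric on $\Lambda^pE^*$ makes the monomials $f^{I}$ orthonormal. For the check, take monomials $f^I$ of degree $p$ and $f^J$ of degree $p+1$. The pairing $\langle f^0\wedge f^I,f^J\rangle$ is nonzero only if $0\notin I$ and $J=\{0\}\cup I$, and then it equals $\langle f^I,\iota(f_0)f^J\rangle=1$. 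This is the usual fact that $e^*=\iota(e^\sharp)$ for an orthonormal coframe. Scaling by $1/\sqrt N$ then gives
\[
 \varepsilon(\zeta)^*=\tfrac1{\sqrt N}\,\iota(f_0)=\iota(\sigma/N).
\]

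The only point needing care is convention-compatibility rather than a genuine obstacle. First, the contraction convention of Section~\ref{sec:double} inserts the vector in the first slot. This matches $\varepsilon$ placing the covector in front with the $1/p!$-normalized determinant metric, and the two conventions are exactly what make $e_i^*=g^{ia}i_a$ hold, as used for $\Lambda$. So I would simply cite that computation, with $G$ in place of $g$, which gives $\varepsilon(\zeta)^*=\iota(G^{-1}\zeta)$. Second, I must check that $\zeta^{\sharp_G}=\sigma/N$, and this follows from $G(\sigma/N,\sigma)=1=\zeta(\sigma)$ and $G(\sigma/N,Y)=0=\zeta(Y)$.
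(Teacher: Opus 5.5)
Your proposal is correct and follows essentially the same route as the paper: a $G$-orthonormal frame with $f_0=\sigma/\sqrt N$, so that $\zeta=f^0/\sqrt N$; the transpose relation between $\varepsilon(f^0)$ and $\iota(f_0)$ on orthonormal monomials; rescaling by $1/\sqrt N$; and extension to $\mathcal B^{p,q}$ because both operators act as the identity on the other factor. Your extra remarks are correct but not essential. These are that the $L^2$ adjoint of this zeroth-order operator is the fiberwise adjoint, and the direct check that $\zeta^{\sharp_G}=\sigma/N$.
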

\begin{proof}
At each point, choose a $G$-orthonormal basis
$v_0=\sigma/\sqrt N,v_1,\ldots,v_n$, with dual basis
$\omega^0,\ldots,\omega^n$. Then $\zeta=\omega^0/\sqrt N$.
On the orthonormal basis of exterior monomials, exterior
multiplication by $\omega^0$ and contraction by $v_0$ have
transposed matrices. Hence
\[
\varepsilon(\zeta)^*
=
\frac1{\sqrt N}\varepsilon(\omega^0)^*
=
\frac1{\sqrt N}\iota(v_0)
=
\iota(\sigma/N).
\]
The identity extends to double forms since both operators act
as the identity on the second exterior factor.
\end{proof}

We set exterior powers and corresponding operators to zero
outside degrees $0,\ldots,N$. Define
\begin{equation}\label{eq:compact-AB}
 \mathsf A_{p,q}=\mathsf P_{p,q}J_q,\qquad
 \mathsf B_{p,q}=\mathsf P_{p-1,q}^*J_q.
\end{equation}
Their domains and targets are
\[
 \begin{split}
 \mathsf A_{p,q}&:\Omega^q(M;F_p)
                         \longrightarrow\Gamma(\mathcal B^{p+1,q}),\\
 \mathsf B_{p,q}&:\Omega^q(M;F_p)
                         \longrightarrow\Gamma(\mathcal B^{p-1,q}).
 \end{split}
\]

\begin{proposition}\label{prop:compact-Bochner}
On $\Omega^q(M;F_p)$ one has the differential-operator identity
\begin{equation}\label{eq:compact-Bochner}
 \Delta_{p,q}
 =\mathsf A_{p,q}^*\mathsf A_{p,q}
  +\mathsf B_{p,q}^*\mathsf B_{p,q}
  +\left(p-q-\frac{p^2}{N}\right)\id.
\end{equation}
Consequently,
\begin{equation}\label{eq:compact-squares}
 \cQ_{p,q}(a)
 =\|\mathsf A_{p,q}a\|_M^2+\|\mathsf B_{p,q}a\|_M^2
   +\left(\frac{p(N-p)}N-q\right)\|a\|_M^2.
\end{equation}
\end{proposition}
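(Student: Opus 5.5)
The plan is to evaluate Akagawa's identity \eqref{eq:Akagawa}, which holds on $X=\R\times M$ by Proposition \ref{prop:quotient-identity}, on horizontal dilation-adapted forms $\omega=f(r)a(y)$ with $a\in\Omega^q(M;F_p)$. Both sides are then rewritten as operators on $M$, and the radial dependence is stripped off. The left side is already known from \eqref{eq:horizontal-barBox}:
\[
 \bBox(fa)=f\,\Delta_{p,q}a-\tfrac1N(f''+2pf')a .
\]
I would choose $f$ so that $\omega$ is dilation invariant in the sense $D_H\omega=-(p+q)\omega$. In the frame $\theta^a=e^r\dd y^a$ this means taking $f=e^{-(p+q)r}$ times the coefficients frozen in flat frames. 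Equivalently, $\omega=J_q a$ viewed on the cone via parallel extension of $\zeta,\theta^a$.

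The main work is computing $\Box\,\omega=(\partial\delta+\delta\partial)\omega$ in terms of $\mathsf P$. In the orthogonal frame $\{\zeta,\theta^a\}$ of the first factor I would write $\partial=\sum_a\varepsilon(\theta^a)D^{\mathrm{tot}}_{e_a}+\varepsilon(\zeta)D_{H}$, with care for the factors $e^{-r}$. Since $D_H$ acts on dilation-invariant double forms of bidegree $(p,q)$ by $-(p+q)$, on the slice $r=0$ one gets $\partial\omega=\mathsf P_{p,q}J_q a$, modulo identifying $\mathcal B^{p,q}$ with double forms at $r=0$. The adjoint $\delta$ is then computed exactly as in Lemma \ref{lem:product-operators}. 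One uses the product measure $\sqrt N\,\dd r\,\dd\vol_k$ and the scaling $e^{2(p+q)r}$ of the double-form metric from Lemma \ref{lem:scaling}, extended to the second factor. The outcome should be that $\delta$ on dilation-invariant forms equals $\mathsf P^*$ plus an explicit zeroth-order radial correction. That correction comes from integrating $\partial_r$ against $e^{2(p+q)r}$ and from $\varepsilon(\zeta)^*=\iota(\sigma/N)$ (the preceding lemma).

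Pairing with $\omega$ and integrating over a period in $r$ (or using cutoffs and dividing out $\int f^2e^{2pr}$) gives
\[
 (\Box\omega,\omega)=\|\mathsf P_{p,q}J_qa\|^2+\|\mathsf P_{p-1,q}^*J_qa\|^2 ,
\]
up to boundary terms. Here the first term is $\|\mathsf A_{p,q}a\|^2$ and the second is $\|\mathsf B_{p,q}a\|^2$. Both terms appear because $\Box$ splits as $\partial^*\partial+\partial\partial^*$, and restricting to invariant forms replaces $\partial$ by $\mathsf P$.

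It remains to collect the constants. The left side contributes $-\frac1N(f''+2pf')$ with $f=e^{-(p+q)r}$, which is $-\frac1N\bigl((p+q)^2-2p(p+q)\bigr)=\frac{p^2-q^2}{N}$. Akagawa adds $(p-q)$. The radial corrections from $\delta$ must then combine to produce exactly $p-q-\frac{p^2}{N}$.

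I expect this bookkeeping to be the main obstacle. Three sources have to be tracked correctly:
\begin{itemize}
 \item the exact shape of $\delta$ on invariant forms, including the $\iota(\sigma/N)$ terms acting on the $\zeta$-components;
 \item whether $J_q$ of a horizontal $a$ picks up $\zeta$-components after applying $\mathsf P^*$;
 \item the weight $e^{2(p+q)r}$ versus $e^{2pr}$.
\end{itemize}
I would first verify the constant on the trivial case $p=0$, where $\mathsf B=0$, and on $q=0$, $p=N$ to fix signs.

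Finally, \eqref{eq:compact-Bochner} is a polarized local identity between differential operators, so it follows from the quadratic identity for all $a$ by polarization. Alternatively, one can compare the operators directly, since all the steps are local. Then \eqref{eq:compact-squares} follows by taking the inner product with $a$ and using $\cQ_{p,q}(a)=(\Delta_{p,q}a,a)_M$.
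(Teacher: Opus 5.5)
\textbf{There is a genuine gap: the constant $p-q-\frac{p^2}{N}$, which is the whole content of the proposition, is never derived, and your sketch of it is internally inconsistent.} Your overall strategy is the paper's: evaluate \eqref{eq:Akagawa} on the dilation-invariant extension of $J_qa$, identify $\partial$ with $\mathsf P$ on invariant forms, and read $\Delta_{p,q}$ off $\bBox$.

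The inconsistency is in your choice of $f$. Formula \eqref{eq:horizontal-barBox} applies to $f(r)a$ with $a$ written in the coframe $\dd y^A$, where the weight in \eqref{eq:product-inner} is $e^{2pr}$. On the cone $\theta^A=e^{qr}\dd y^A$. So the invariant extension $\mathcal U_{p,q}(J_qa)$, which carries the factor $e^{-(p+q)r}$ in the $\theta$-frame, equals $e^{-(p+q)r}e^{qr}a=e^{-pr}a$ in the $\dd y$-frame. You must therefore take $f=e^{-pr}$, which gives $-\frac1N(f''+2pf')=\frac{p^2}{N}f$, not $\frac{p^2-q^2}{N}f$. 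With your $f=e^{-(p+q)r}$, the form $\omega$ has pointwise norm $e^{-2qr}|a|^2$ and affine components homogeneous of degree $-(p+2q)$. It is not dilation invariant, and $\partial\omega|_{r=0}=\mathsf P_{p,q}J_qa-q\,\varepsilon(\zeta)J_qa$. This contradicts your claim that $\partial\omega|_{r=0}=\mathsf P_{p,q}J_qa$.

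Your sentence ``the radial corrections from $\delta$ must then combine to produce exactly $p-q-\frac{p^2}{N}$'' assumes the result. The missing idea is that for genuinely invariant forms there is no correction at all. Under flat transport the double-form metric scales by $e^{2(p+q)r}$, which exactly cancels the factor $e^{-(p+q)r}$ in $\mathcal U_{p,q}$, so $\mathcal U_{p,q}$ is a pointwise isometry. Combined with $\dd\vol_g=\sqrt N\,\dd r\,\dd\vol_k$, this gives $\partial=\mathsf P+\varepsilon(\zeta)\partial_r$ and $\delta=\mathsf P^*-\iota(\sigma/N)\partial_r$ in invariant frames. Hence $\Box\,\mathcal U_{p,q}u=\mathcal U_{p,q}(\mathsf P_{p,q}^*\mathsf P_{p,q}+\mathsf P_{p-1,q}\mathsf P_{p-1,q}^*)u$ exactly. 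Insert this and $\bBox(e^{-pr}a)=e^{-pr}(\Delta_{p,q}a+\frac{p^2}{N}a)$ into \eqref{eq:Akagawa}, restrict to $r=0$, and apply $J_q^*$; this yields \eqref{eq:compact-Bochner} directly. The whole constant comes from Akagawa's $p-q$ and the $\frac{p^2}{N}$ on the $\bBox$ side.

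Your non-invariant choice can also be made to work. With $c=e^{-qr}$, the $\Box$ side picks up $-\frac1N c''=-\frac{q^2}{N}c$. That holds only once you check that the first-order cross terms cancel, which amounts to $\mathsf P_{p-1,q}\iota(\sigma/N)+\iota(\sigma/N)\mathsf P_{p,q}=-\frac qN\id$. None of this appears in your sketch.

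Finally, $r$ has no period on $X=\R\times M$, and invariant forms have infinite $L^2$ norm there. Your quadratic-form version ``up to boundary terms'' would therefore need a genuine cutoff argument. Working with the pointwise operator identity and evaluating at $r=0$ avoids this.
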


\begin{proof}
Put $X=\mathcal C/\Gamma\simeq\mathbb R\times M$, and let
$m_s:X\to X$ be the map induced by the dilation $x\mapsto e^sx$ on $V$.
We first define the invariant extension map.
Recall that 
\[
\mathcal B^{p,q}
=\Lambda^pE^*\otimes\Lambda^qE^*\longrightarrow M.
\]
Define the set of dilation invariant sections as
\[
\mathcal A^{p,q}(X)^{\mathrm{dil}}
=
\left\{
\omega\in
\Gamma^\infty\!\left(
X,\Lambda^pT^*X\otimes\Lambda^qT^*X
\right) \ | \ 
m_s^*\omega=\omega
\text{ for every }s\in\mathbb R
\right\}.
\]
Then, the map 
\[
\mathcal U_{p,q}:
\Gamma^\infty(M,\mathcal B^{p,q})
\longrightarrow
\mathcal A^{p,q}(X)^{\mathrm{dil}}
\]
defined by the componentwise formula
\begin{equation}\label{eq:U-definition}
(\mathcal U_{p,q}u)_{I;J}(e^r\sigma(y))
=e^{-(p+q)r}u_{I;J}(y)
\end{equation}
is an isomorphism. Indeed, pullback by $m_s$ multiplies by $e^{(p+q)s}$, so
\[
\bigl(m_s^*\mathcal U_{p,q}u\bigr)_{I;J}
       (e^r\sigma(y))
=
e^{(p+q)s}e^{-(p+q)(r+s)}u_{I;J}(y)
=
(\mathcal U_{p,q}u)_{I;J}(e^r\sigma(y)).
\]
Conversely, dilation invariance forces precisely this
scaling of the affine components. Its inverse, denoted by $\mathcal R_0$, evaluates the full tensor at $r=0$.

We next compute the operator induced by $\partial$ on dilation invariant double forms. 
Put $m=p+q$. In local coordinates $y^a$ on $M$, we have
\[
D_{\partial_r}(\mathcal U_{p,q}u)
=-m e^{-mr}u,
\qquad
D_{\partial_a}(\mathcal U_{p,q}u)
=e^{-mr}D^{\mathrm{tot}}_{\partial_a}u.
\]
Using
\[
\partial
=\varepsilon(\dd r)D_{\partial_r}
 +\sum_a\varepsilon(\dd y^a)D_{\partial_a}
\]
we compute 
\[ \partial(\mathcal U_{p,q}u)=
e^{-(m+1)r}
\left(
\sum_a\varepsilon(\theta^a)
             D^{\mathrm{tot}}_{\partial_a}u
-m\varepsilon(\zeta)u
\right).
\]
The expression in parentheses is
$\mathsf P_{p,q}u$ by definition. Therefore,
\begin{equation}\label{eq:U-first-differential}
\partial(\mathcal U_{p,q}u)
=\mathcal U_{p+1,q}(\mathsf P_{p,q}u).
\end{equation}

To identify its adjoint operator, choose local frames of
$\mathcal B^{p-1,q}$ and $\mathcal B^{p,q}$ over an open subset of $M$,
and extend their elements invariantly to $X$ using
$\mathcal U_{p-1,q}$ and $\mathcal U_{p,q}$, respectively.
These extensions form local frames of the corresponding double-form
bundles on $X$. In these invariant frames, the operator $\partial$ acts like
\[
\partial
=
P_{p-1,q}+\varepsilon(\zeta)\partial_r.
\]
Indeed, its action on the frame elements is given by
$\partial\mathcal U=\mathcal U P$, while its action on their scalar
coefficients follows from the Leibniz rule. 
The formal adjoint can now be computed in these same frames.
We first make a remark about the metric used to compute the adjoint in this frame.
Under flat affine transport, the metric on covectors scales by
$e^{2r}$. Hence the induced metric on double forms of bidegree
$(p,q)$ scales by $e^{2(p+q)r}$. On the other hand, each invariant
extension has the coefficient factor $e^{-(p+q)r}$. Consequently,
for $u,v\in\Gamma^\infty(M,\mathcal B^{p,q})$,
\[
\begin{aligned}
\big\langle\mathcal U_{p,q}u,\mathcal U_{p,q}v\big\rangle(r,y)
=
e^{2(p+q)r}e^{-2(p+q)r}\langle u,v\rangle_G(y)
=
\langle u,v\rangle(y).
\end{aligned}
\]
Thus the invariant extension is a pointwise isometry, and the
Gram matrices of the invariant frames are independent of $r$.
Moreover,
\[
d\operatorname{vol}_g
=
\sqrt N\,dr\,d\operatorname{vol}_k.
\]
Integration by parts on compactly supported test forms therefore
shows that the radial derivative of the scalar coefficients
in these frames has formal adjoint
\[
\partial_r^*=-\partial_r.
\]
Furthermore,
\[
\varepsilon(\zeta)^*
=
\iota(\sigma/N).
\]
Consequently,
\[
\delta
=
P_{p-1,q}^*
-
\iota(\sigma/N)\partial_r.
\]
Here $\partial_r$ differentiates the scalar coefficients relative to
the invariant frames. The derivatives of the frame elements themselves
are already included in $P_{p-1,q}$. For $u\in\Gamma^\infty(M,\mathcal B^{p,q})$, the coefficients of
$\mathcal U_{p,q}u$ relative to these frames are independent of $r$.
It follows that
\[
\delta(\mathcal U_{p,q}u)
=
\mathcal U_{p-1,q}(P_{p-1,q}^*u).
\]
Together with the corresponding formula for $\partial$, this gives
\begin{equation}\label{box}
\Box(\mathcal U_{p,q}u)
=
\mathcal U_{p,q}(L_{p,q}u),
\end{equation}
where
\[
L_{p,q}
=
P_{p,q}^*P_{p,q}
+
P_{p-1,q}P_{p-1,q}^*.
\]

We next compute \(\bar\Box\) on the extension of \(J_q\eta\), where
\(\eta\in\Omega^q(M;F_p)\). Recall that in a flat
coefficient frame, if
\[
\eta=\sum_{I,A}\eta_{I;A}(y)\,dx^I\otimes dy^A 
\]
then the inclusion \(J_q\) replaces \(dy^A\) by \(\theta^A\) at the slice.
Since \(\theta^A=e^{qr}dy^A\), in flat coordinates we have
\begin{equation}\label{6}
\mathcal U_{p,q}(J_q\eta)=e^{-pr}\eta.
\end{equation}
We already established in Section \ref{sec:product} that
\[
\bar\Box(f(r)\eta)
=
f(r)\Delta_{p,q}\eta-\frac1N(f''+2pf')\eta.
\]
Taking \(f=e^{-pr}\), we have
\[
\bar\Box(f(r)\eta)
=
e^{-pr}\Delta_{p,q}\eta+\frac{p^2}{N}e^{-pr}\eta.
\]
Together with \eqref{6}, this yields
\begin{equation}\label{barbox}
\bar\Box(\mathcal U_{p,q}J_q\eta)
=
\mathcal U_{p,q}J_q
\left(\Delta_{p,q}\eta+\frac{p^2}{N}\eta\right).
\end{equation}
Substituting \eqref{box} and \eqref{barbox} into the cone identity
\[
\bar\Box=\Box+(p-q)\mathrm{Id},
\]
we obtain
\[
\begin{aligned}
\mathcal U_{p,q}
\left[J_q\left(\Delta_{p,q}\eta+\frac{p^2}{N}\eta\right)\right]=
\mathcal U_{p,q}
\left[L_{p,q}J_q\eta+(p-q)J_q\eta\right].
\end{aligned}
\]
Applying $\mathcal{R}_0$, we obtain
\[
J_q\left(\Delta_{p,q}\eta+\frac{p^2}{N}\eta\right)
=
L_{p,q}J_q\eta+(p-q)J_q\eta.
\]
Finally, we apply \(J_q^*\) on both sides and, since \(J_q^*J_q=\mathrm{Id}\) and the adjoints of $A_{p,q}$ $B_{p,q}$ satisfy
\[
A_{p,q}^*=J_q^*P_{p,q}^*
 \ \ \ \text{and} \ \ \ 
B_{p,q}^*=J_q^*P_{p-1,q},
\]
we have the desired
\[
\Delta_{p,q}
=
A_{p,q}^*A_{p,q}+B_{p,q}^*B_{p,q}
+
\left(p-q-\frac{p^2}{N}\right)\mathrm{Id}.
\]
The formula for $\cQ_{p,q}(a)$ follows by taking the inner product $(\Delta_{p,q}a,a)$.
\end{proof}

\subsection{Computation of cohomology groups}
The Bochner identity implies the following:
\begin{proposition}\label{prop:spectral}
For every $a\in\Omega^q(M;F_p)$,
\begin{equation}\label{eq:base-gap}
 \cQ_{p,q}(a)
 \geq\left(\frac{p(N-p)}N-q\right)\|a\|_M^2.
\end{equation}
\end{proposition}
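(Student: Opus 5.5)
This is an immediate consequence of Proposition \ref{prop:compact-Bochner}. The plan is to take the $L^2$ inner product of the operator identity \eqref{eq:compact-Bochner} with $a\in\Omega^q(M;F_p)$ over the closed manifold $M$. Then we discard the two nonnegative square terms.

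First, integrate by parts on $M$ with respect to the metric $H_p\otimes\Lambda^qk^{-1}$ and the density $\dd\vol_k$. No boundary terms appear because $M$ is closed. This gives
\[
(\Delta_{p,q}a,a)_M=\|\dd_pa\|_M^2+\|\dd_p^*a\|_M^2=\cQ_{p,q}(a).
\]
Here $\dd_p^*$ is by definition the formal adjoint of $\dd_p$ for these metrics; it is irrelevant that the flat connection need not be metric.

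Second, do the same for the right-hand side. Since $\mathsf A_{p,q}^*$ and $\mathsf B_{p,q}^*$ are the formal adjoints of $\mathsf A_{p,q}$ and $\mathsf B_{p,q}$ for the metrics induced by $G$ and $\dd\vol_k$, we get
\[
(\mathsf A_{p,q}^*\mathsf A_{p,q}a,a)_M=\|\mathsf A_{p,q}a\|_M^2,
\qquad
(\mathsf B_{p,q}^*\mathsf B_{p,q}a,a)_M=\|\mathsf B_{p,q}a\|_M^2.
\]
These adjoints are obtained by composing the adjoints of $\mathsf P$, $\mathsf P^*$ and the isometric inclusion $J_q$. The constant term contributes $(p-q-p^2/N)\|a\|_M^2=\bigl(p(N-p)/N-q\bigr)\|a\|_M^2$. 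This recovers \eqref{eq:compact-squares}.

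Finally, drop $\|\mathsf A_{p,q}a\|_M^2+\|\mathsf B_{p,q}a\|_M^2\geq0$ to obtain \eqref{eq:base-gap}. No step is a genuine obstacle. The only point needing care is that all the formal adjoints are taken with respect to the same pair of metrics and the same density $\dd\vol_k$ used to define $\cQ_{p,q}$. That consistency is built into the definitions in Section \ref{sec:compact-Bochner}.
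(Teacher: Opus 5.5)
Your proposal is correct and follows the paper's argument: the paper pairs the Bochner identity \eqref{eq:compact-Bochner} with $a$ to get \eqref{eq:compact-squares}, which it does at the end of the proof of Proposition~\ref{prop:compact-Bochner}, and then drops the nonnegative terms $\|\mathsf A_{p,q}a\|_M^2$ and $\|\mathsf B_{p,q}a\|_M^2$. The pairing is legitimate because all formal adjoints are taken with respect to the same fiber metrics $H_p\otimes\Lambda^qk^{-1}$ and $G$ and the same density $\dd\vol_k$, as you note.
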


\begin{proof}
Both squared norms on the right-hand side of
\eqref{eq:compact-squares} are nonnegative. Removing them
gives \eqref{eq:base-gap}.
\end{proof}

The flat connections need not preserve the positive fiber
metrics used in this estimate. We recall the passage from
the corresponding Hodge Laplacian to flat cohomology.

\begin{lemma}\label{lem:flat-Hodge}
Let $F$ be a flat real vector bundle over a closed Riemannian
manifold, equipped with any positive fiber metric. Let $\dd_F$
be its flat exterior derivative and $\dd_F^*$ its formal
adjoint. Every class in $H^q(M;F)$ has a unique representative
in
\[
 \ker\Delta_{F,q},\qquad
 \Delta_{F,q}=\dd_F\dd_F^*+\dd_F^*\dd_F.
\]
\end{lemma}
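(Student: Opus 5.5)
The plan is the standard elliptic Hodge argument, with one point to check: flatness of $\dd_F$ is the only structure used, and compatibility of the metric with the connection is never needed. First I check that $\dd_F^2=0$; this holds because the connection is flat, so in a local parallel frame $\dd_F$ acts componentwise as the ordinary exterior derivative. Next I check that $\Delta_{F,q}$ is elliptic. Its principal symbol at a covector $\xi$ is $\varepsilon(\xi)\varepsilon(\xi)^*+\varepsilon(\xi)^*\varepsilon(\xi)$, where the adjoint is taken for the chosen fiber metric tensored with the Riemannian metric on forms. By the exterior algebra relations this equals $|\xi|^2\id$. The symbol depends only on the symbol of $\dd_F$, which is $\varepsilon(\xi)\otimes\id_F$, and on the fiber metric; it does not involve the connection. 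Since the metric is positive definite, $\Delta_{F,q}$ is a formally self-adjoint, nonnegative, second-order elliptic operator on the closed manifold $M$.

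Second, I invoke elliptic theory on closed manifolds. This gives that $\mathcal H^q=\ker\Delta_{F,q}$ is finite dimensional and consists of smooth forms, and that there is an $L^2$-orthogonal decomposition
\[
\Omega^q(M;F)=\mathcal H^q\oplus\operatorname{im}\Delta_{F,q},
\]
obtained by inverting $\Delta$ on the orthogonal complement of its kernel via a Green operator $\mathsf G$. Integrating by parts on $M$ gives $(\Delta a,a)=\|\dd_Fa\|^2+\|\dd_F^*a\|^2$. Hence $\ker\Delta_{F,q}=\ker\dd_F\cap\ker\dd_F^*$. Combining this with $\dd_F^2=0$, I refine the decomposition to the Hodge decomposition $\mathcal H^q\oplus\dd_F\Omega^{q-1}\oplus\dd_F^*\Omega^{q+1}$, and the three summands are mutually orthogonal.

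Third, I deduce the statement. For existence, take a closed form $a$ and write $a=h+\dd_F\dd_F^*\mathsf Ga+\dd_F^*\dd_F\mathsf Ga$. I need to show the last term vanishes. Because $\dd_F a=0$ and $\dd_F^2=0$, the form $\dd_F^*\dd_F\mathsf Ga$ is closed and lies in the image of $\dd_F^*$, so it is orthogonal to closed forms. Pairing it with itself therefore gives zero, so it vanishes. Thus $a-h$ is exact and $[a]=[h]$. For uniqueness, suppose $h$ is harmonic and $h=\dd_Fb$. Then $\|h\|^2=(\dd_Fb,h)=(b,\dd_F^*h)=0$, so $h=0$.

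I do not expect a real obstacle. The only point requiring care is the first step: one must confirm that the metric need not be parallel. Parallelism would only matter for Weitzenböck-type formulas, not for the symbol computation or for the integration by parts above.
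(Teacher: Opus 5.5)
Your proposal is correct and follows the same elliptic Hodge argument as the paper: symbol $|\xi|^2\id$, a Green operator, and uniqueness because a harmonic exact form vanishes. The one difference is in the existence step. The paper writes $a=Pa+\dd_F\dd_F^*Ga$ after asserting without proof that $G$ commutes with $\dd_F$, whereas you kill $\dd_F^*\dd_F\mathsf Ga$ by noting it is closed (using $\dd_Fh=0$) and coexact, hence orthogonal to itself, so your version avoids having to justify that commutation.
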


\begin{proof}
Flatness gives $\dd_F^2=0$. The principal symbol of $\dd_F$
is exterior multiplication by a covector, tensored with
$\id_F$. Its adjoint symbol is contraction by the metric
dual covector. The relation
$\varepsilon(\xi)\iota(\xi^\sharp)
 +\iota(\xi^\sharp)\varepsilon(\xi)=|\xi|^2\id$
shows that the principal symbol for
$\Delta_{F,q}$ is $|\xi|^2\id$. Thus this operator is
elliptic. It is formally self-adjoint and nonnegative, since
\[
 (\Delta_{F,q}a,a)
       =\|\dd_Fa\|^2+\|\dd_F^*a\|^2.
\]
On a closed manifold, self-adjoint elliptic spectral theory
gives its finite-dimensional smooth kernel, its orthogonal
projection $P$ onto $\ker(\Delta_{F,q})$, and a Green operator $G$, satisfying
\[
 \id=P+\Delta_FG=P+G\Delta_F.
\]
These operators preserve smooth forms. From
$\dd_F^2=(\dd_F^*)^2=0$ one obtains, by expanding the products,
$\dd_F\Delta_F=\Delta_F\dd_F$ and the corresponding identity
for $\dd_F^*$. The same commutation holds with $G$. Therefore, if $\dd_Fa=0$, then $\dd_FGa=G\dd_Fa=0$, so
\[
 a=Pa+\Delta_FGa
   =Pa+\dd_F\dd_F^*Ga.
\]
The harmonic form $Pa$ represents the same cohomology class.
If a harmonic form is exact, say $a=\dd_Fb$, then
$\|a\|^2=(\dd_F^*a,b)=0$. This proves uniqueness as well.
\end{proof}

\begin{proof}[Proof of Theorem \ref{thm:vanishing}]
Apply Lemma \ref{lem:flat-Hodge} to $F_p$ with the metrics in
Proposition \ref{prop:spectral}. A harmonic representative $a$ has
$\cQ_{p,q}(a)=0$. If $q<p(N-p)/N$, the coefficient in
\eqref{eq:base-gap} is positive, and hence $a=0$.
Therefore $H^q(M;F_p)=0$.
\end{proof}

\section{The Euler characteristic in dimension four}\label{sec:euler}

We now turn the analytic estimate into the topological result in Theorem
\ref{thm:four}. We recall explicitly a well-known fact about cohomology with flat
coefficients.

\begin{lemma}\label{lem:duality}
Let $M$ be a closed oriented $n$-manifold and let $F$ be a flat real
vector bundle. Integration of the fiberwise evaluation pairing
induces a nondegenerate pairing
\begin{equation}\label{eq:PD}
 H^q(M;F)\times H^{n-q}(M;F^*)\longrightarrow\R.
\end{equation}
Moreover,
\begin{equation}\label{eq:flat-Euler}
 \chi(M;F)=\rank(F)\chi(M).
\end{equation}
\end{lemma}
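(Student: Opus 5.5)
We prove the two assertions separately. For the duality \eqref{eq:PD}, we use the de Rham model of flat cohomology together with Lemma~\ref{lem:flat-Hodge}. Choose a Riemannian metric on $M$ and a positive fiber metric $h$ on $F$; the induced metric on $F^*$ is the dual metric. Let $\tau:F\to F^*$ be the fiberwise isometry given by $h$ (not flat in general), and let $\star$ be the Hodge star of the chosen metric. Define $\#:\Omega^q(M;F)\to\Omega^{n-q}(M;F^*)$ by $\#a=\tau\star a$. The pairing $(a,b)\mapsto\int_M\langle a\wedge b\rangle$, with evaluation $F\otimes F^*\to\R$, is well defined on cohomology. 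Indeed, the Leibniz rule holds, $\dd\langle a\wedge b\rangle=\langle\dd_Fa\wedge b\rangle+(-1)^q\langle a\wedge\dd_{F^*}b\rangle$, because evaluation is a parallel bundle map for the flat connections. Stokes' theorem then kills exact terms.

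The key computation identifies the formal adjoint: $\dd_F^*=\pm\#^{-1}\dd_{F^*}\#$. This follows from integrating by parts against $\int\langle a\wedge\#b\rangle=(a,b)_{L^2}$, exactly as in the untwisted case. Consequently $\#$ maps $\ker\Delta_{F,q}$ isomorphically onto $\ker\Delta_{F^*,n-q}$, since it interchanges $\dd$ and $\dd^*$ up to sign. For a harmonic $a\neq0$, the class of $\#a$ pairs with $a$ to give $\|a\|^2>0$. By Lemma~\ref{lem:flat-Hodge} each class has a unique harmonic representative, and both spaces are finite dimensional, so the pairing is nondegenerate. We expect the main care to be needed here: tracking that $\tau$ need not commute with the flat connections. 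The adjoint formula absorbs this non-flatness, since it is purely an $L^2$ identity and uses no parallelism of $h$.

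For \eqref{eq:flat-Euler}, we pass to a finite cell structure. Flat cohomology $H^\bullet(M;F)$ is the cohomology of the local system of flat sections. By the de Rham theorem with local coefficients, namely the sheaf-theoretic comparison of the fine resolution $\Omega^\bullet(F)$ with the locally constant sheaf, it equals the cellular or simplicial cohomology of a finite triangulation with twisted coefficients. The cochain group $C^i$ of this complex has dimension $\rank(F)$ times the number of $i$-cells. Since the Euler characteristic of a finite complex equals the alternating sum of the dimensions of its chain groups, we obtain $\chi(M;F)=\rank(F)\sum_i(-1)^i\#\{i\text{-cells}\}=\rank(F)\chi(M)$.

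Alternatively, \eqref{eq:flat-Euler} follows from the Atiyah--Singer index theorem: the symbol of $\dd_F+\dd_F^*$ is that of the de Rham operator tensored with $\id_F$, so its index is $\rank(F)$ times the index of the untwisted operator, which is $\chi(M)$. We would present the combinatorial argument, which is more elementary.
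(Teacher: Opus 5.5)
The paper gives no proof of this lemma. It is quoted as a well-known fact and used only to obtain $H^{4-q}(M;F_2)\cong H^q(M;F_3)^*$ and $\chi(M;F_2)=10\chi(M)$. Your argument is a correct proof of both parts, so the comparison is with the standard treatment rather than with anything in the paper.

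For duality, your formula $\dd_F^*=(-1)^q\#^{-1}\dd_{F^*}\#$ on $q$-forms is right and uses no parallelism of $h$. Combined with $\#_{F^*}\#_F=\star\star=\pm\id$, it shows that $\#$ carries $\ker\Delta_{F,q}$ onto $\ker\Delta_{F^*,n-q}$. Here you use the dual metric on $F^*$, whose musical isomorphism is $\tau^{-1}$. Injectivity of $H^q(M;F)\to H^{n-q}(M;F^*)^*$, together with equality of the finite dimensions, then gives nondegeneracy.

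What your route buys is self-containment relative to the paper. It rests only on Lemma~\ref{lem:flat-Hodge}, which the paper proves for exactly the non-parallel fiber metrics you use. The usual topological proof, via cap product with the fundamental class or dual cell decompositions with local coefficients, avoids analysis but needs more bookkeeping. For the Euler characteristic, the fine-resolution de Rham theorem plus a finite triangulation is the cleanest argument and works uniformly in every dimension.

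One caution about the index-theoretic alternative you mention. The fact that the symbol of $\dd_F+\dd_F^*$ is the untwisted symbol tensored with $\id_F$ does not by itself make the index $\rank(F)$ times the untwisted one. For a general elliptic operator $P$, the index of $P\otimes F$ depends on the $K$-theory class of $F$ through the higher terms of $\mathrm{ch}(F)$. For the de Rham operator the conclusion does hold, because the index density is $\mathrm{ch}(F)\,e(TM)$ and $e(TM)$ is already of top degree. Alternatively, for flat $F$, $\mathrm{ch}(F)$ reduces to its rank in real cohomology. Since you plan to present the combinatorial argument, this does not affect your proof.
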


\begin{proof}[Proof of Theorem \ref{thm:four}]
Assume first that $M$ is oriented. For $n=4$, the standard bundle
$E$ has rank $N=5$. Taking $p=2$ and $p=3$ in
Theorem \ref{thm:vanishing} gives
\begin{equation}\label{eq:low-vanish}
 H^0(M;F_2)=H^1(M;F_2)=0,
 \qquad
 H^0(M;F_3)=H^1(M;F_3)=0,
\end{equation}
because $p(5-p)/5=6/5$ in both cases.

Let $\nu$ be a parallel nonzero volume form on $E$. Contraction
with $\nu$ is a flat bundle isomorphism
\[
 \Lambda^2E\longrightarrow\Lambda^3E^*,
 \qquad v\wedge w\longmapsto\iota(w)\iota(v)\nu.
\]
Consequently $F_2^*\cong F_3$. Applying
Lemma \ref{lem:duality} and \eqref{eq:low-vanish}, we obtain
\[
 H^4(M;F_2)\cong H^0(M;F_3)^*=0,
 \qquad
 H^3(M;F_2)\cong H^1(M;F_3)^*=0.
\]
The only possibly nonzero cohomology group of $F_2$ is therefore
$H^2(M;F_2)$. Since $\rank(F_2)=\binom52=10$,
\eqref{eq:flat-Euler} gives
\[
 10\chi(M)=\chi(M;F_2)
 =\dim_{\R}H^2(M;F_2)\geq0.
\]
This proves both conclusions for oriented $M$.

If $M$ is nonorientable, its orientation double cover
$\widehat M$ is again a closed properly convex projective
four-manifold. The oriented case applies to $\widehat M$.
Euler characteristic multiplies by the degree of a finite cover,
so $2\chi(M)=\chi(\widehat M)\geq0$.
\end{proof}

\subsection{The spectrum of Euler characteristics}

\begin{example}\label{ex:torus}
The inequality is attained by convex projective four-tori.
Take the positive cone $\C=\R_{>0}^5$. Its projectivization is an
open simplex. The determinant-one positive diagonal group is
\[
 A=\{\operatorname{diag}(e^{u_1},\ldots,e^{u_5}):
                     u_1+\cdots+u_5=0\}\cong\R^4.
\]
In logarithmic coordinates, $A$
acts by translations on $\R^4$. A lattice $\Gamma<A$ therefore
acts freely and cocompactly on the simplex, with quotient $T^4$.
Its Euler characteristic is zero.
\end{example}

\begin{corollary}\label{cor:spectrum}
The Euler characteristics of closed oriented properly convex real
projective four-manifolds are exactly the non-negative even integers.
\end{corollary}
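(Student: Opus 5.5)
The claim has two halves: every such Euler characteristic is a non-negative even integer, and every non-negative even integer actually occurs. I would prove them separately, each by combining an earlier result of the paper with a cited construction.

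\emph{Necessity.} Let $M$ be a closed oriented properly convex projective four-manifold. Theorem~\ref{thm:four} gives $\chi(M)\geq0$, through the identity $10\chi(M)=\dim_{\R}H^2(M;\Lambda^2E^*)$. For parity I would use Kobayashi's theorem \cite[Theorem~4.16]{Kobayashi1984}: the rational Pontryagin classes of a real projective manifold vanish. In particular $p_1(M)=0$ in $H^4(M;\mathbb Q)$, so the Hirzebruch signature theorem gives $\sigma(M)=p_1[M]/3=0$. For a closed oriented four-manifold,
\[
\chi(M)=2-2b_1+b_2 .
\]
Since $\sigma(M)=b_2^+-b_2^-=0$, we get $b_2=2b_2^+$, which is even. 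Hence $\chi(M)$ is even. As a consistency check, the mod-two reduction $\chi\equiv\sigma\pmod 2$ also gives parity directly once the intersection form is unimodular. The only possible obstacle is confirming that Kobayashi's statement covers our setting, not just projectively flat manifolds with extra hypotheses. It does, since $M$ carries a flat $\RP^4$-structure.

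\emph{Realization.} The value $0$ is attained by Example~\ref{ex:torus}: the simplex quotient $T^4$ is orientable, being a torus, and has $\chi=0$. Every positive even integer $2k$ is attained by the closed oriented properly convex projective four-manifolds of Riolo--Seppi--Slavich \cite[Theorem~1.3]{RioloSeppiSlavich}. I would check that their manifolds are quotients $\Omega/\Gamma$ with $\Gamma$ acting freely, properly discontinuously and cocompactly, so that they fall under the same definition as here.

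Combining the two halves, the set of Euler characteristics is exactly $2\mathbb Z_{\geq0}$. The main point to be careful about is hypotheses: orientability and compactness of the cited examples, and the applicability of the vanishing of $p_1$. No new analytic argument is required.
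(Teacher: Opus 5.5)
Your proposal is correct and follows the paper's proof essentially step for step. You get $\chi\geq0$ from Theorem~\ref{thm:four}, and vanishing of $p_1$ plus the Hirzebruch signature theorem give $b_2=2b_2^+$, hence $\chi=2-2b_1+2b_2^+$ is even; the values are realized by Example~\ref{ex:torus} and \cite[Theorem~1.3]{RioloSeppiSlavich}. The only cosmetic difference is that the paper derives $p_1(TM)=0$ directly from the smooth splitting $TM\oplus\R\cong E$ with $E$ flat (Chern--Weil and stability), calling this the present case of Kobayashi's theorem, whereas you cite Kobayashi outright.
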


\begin{proof}
The splitting $E=\R\sigma\oplus D^E\sigma(TM)$ in
Section \ref{sec:compact-Bochner} gives a smooth bundle isomorphism
$TM\oplus\R\cong E$. Since $E$ is flat, Chern--Weil theory
and stability give $p_1(TM)=0$. This is the present
case of Kobayashi's theorem
\cite[Theorem~4.16]{Kobayashi1984}, based on Avez's identity;
see \cite[Section~2.2]{RioloSeppiSlavich}.
The Hirzebruch signature theorem gives $\sigma(M)=0$.
Writing $b_2=b_2^++b_2^-$, this implies $b_2^+=b_2^-$, and
Poincar\'e duality gives
\[
 \chi(M)=2-2b_1+b_2=2-2b_1+2b_2^+\in2\mathbb Z.
\]
It is nonnegative by Theorem \ref{thm:four}.
Conversely, zero is realized in Example \ref{ex:torus}, and every
positive even integer is realized by
\cite[Theorem 1.3]{RioloSeppiSlavich}.
\end{proof}

\appendix
\section{Coordinate proof of Akagawa's identities}\label{app:Hessian}

We prove \eqref{eq:Kahler} for a positive Hessian metric
$g=D\dd\phi$. The argument is local and uses only the first
derivatives of $g$. In particular, neither the canonical
Monge--Amp\`ere equation, nor completeness
is assumed. 

\subsection{Metric derivatives and the Levi--Civita connection}

Choose affine coordinates $x^1,\ldots,x^N$ and put
$D_i=D_{\partial_i}$. Thus $D_i\dd x^j=0$, and the covariant
components of $g$ and its cubic derivative are
\[
 g_{ij}=\phi_{ij},\qquad
 C_{ijk}=\tfrac12\phi_{ijk}.
\]
The tensor $C$ is totally symmetric. We reserve $\nabla^g$ for
the Levi--Civita connection, to distinguish it from $D$.
Since every first derivative of $g$ is a third derivative of
$\phi$, its Christoffel symbols simplify to
\begin{align}
 \Gamma^k_{ij}
 &=\tfrac12 g^{k\ell}
   (\partial_i g_{j\ell}+\partial_j g_{i\ell}
                     -\partial_\ell g_{ij})
   =\tfrac12 g^{k\ell}\phi_{ij\ell},
 \qquad
 \alpha_i=\sum_j\Gamma^j_{ij}
          =\tfrac12 g^{ab}\phi_{iab}.
 \label{eq:app-Christoffel}
\end{align}
Thus $\nabla^g_{\partial_i}\partial_j
=\Gamma^k_{ij}\partial_k$, whereas
$D_{\partial_i}\partial_j=0$.

\noindent Differentiating $g^{ia}g_{aj}=\delta^i_j$ gives
\[
 \partial_k g^{ij}
 =-g^{ia}(\partial_k g_{ab})g^{bj}
 =-2g^{ia}g^{jb}C_{kab}.
\]
For the density $\mu=\sqrt{\det(g_{ij})}$, Jacobi's determinant
formula gives
\[
 \partial_i\mu
 =\tfrac12\mu\,g^{ab}\partial_i g_{ab}
 =\alpha_i\mu.
\]

Fix a point $x_0$. A constant linear change of affine coordinates
makes $g_{ij}(x_0)=\delta_{ij}$. These remain affine coordinates;
they are not Riemannian normal coordinates, and the derivatives
of $g$ must still be retained. At this point,
\begin{equation}\label{eq:app-metric-derivative}
 \Gamma^k_{ij}=C_{ijk},\qquad
 \partial_k g^{ij}=-2C_{kij},\qquad
 \alpha_i=\sum_j C_{ijj},\qquad
 \sum_i\partial_i g^{ij}=-2\alpha_j.
\end{equation}
The last equality uses the total symmetry of $C$.

\subsection{Exterior operators and differentiation of the tensor metric}

As in Section \ref{sec:double}, write
\[
 e_i=\varepsilon(\dd x^i),\quad
 \bar e_i=\bar\varepsilon(\dd x^i),\quad
 i_i=\iota(\partial_i),\quad
 \bar i_i=\bar\iota(\partial_i).
\]
Their relations are
\begin{equation}\label{eq:app-algebra}
 i_ie_j+e_ji_i=\delta_{ij}\id,
 \qquad
 \bar i_i\bar e_j+\bar e_j\bar i_i=\delta_{ij}\id.
\end{equation}
Two contractions in the same factor anticommute, and all
operators in different factors commute. In particular,
\begin{equation}\label{eq:app-operators}
 \partial=\sum_i e_iD_i,\qquad
 \bdel=\sum_i\bar e_iD_i,\qquad
 \Lambda=\sum_{i,j}g^{ij}i_i\bar i_j.
\end{equation}

\noindent On double forms of a fixed bidegree, let
\[
 S_i=\sum_{j,k}\Gamma^k_{ij}
       (e_ji_k+\bar e_j\bar i_k).
\]
This is the connection action on every covariant slot:
\begin{align*}
 (S_i\omega)_{I;J}
 ={}&\sum_{r=1}^p\Gamma^a_{i i_r}
    \omega_{i_1\ldots i_{r-1}a i_{r+1}\ldots i_p;J}\\
 &+\sum_{s=1}^q\Gamma^b_{i j_s}
    \omega_{I;j_1\ldots j_{s-1}b j_{s+1}\ldots j_q}.
\end{align*}
Consequently $\nabla_i^g\omega=D_i\omega-S_i\omega$.
At $x_0$, its expression is
\[
 S_i=\sum_{j,k}C_{ijk}(e_ji_k+\bar e_j\bar i_k).
\]
The matrix $\Gamma_i$ is self-adjoint for the relevant metric:
lowering its upper index gives
$g_{k\ell}\Gamma^\ell_{ij}=C_{ijk}$, which is symmetric in
$j,k$. Its induced action $S_i$ on exterior tensors is therefore
self-adjoint. Since $\nabla^g$ preserves their metric,
\begin{align}
 \partial_i\langle\omega,\eta\rangle_g
 &=\langle\nabla_i^g\omega,\eta\rangle_g
   +\langle\omega,\nabla_i^g\eta\rangle_g\notag\\
 &=\langle D_i\omega,\eta\rangle_g
   +\langle\omega,D_i\eta\rangle_g
   -2\langle\omega,S_i\eta\rangle_g.
 \label{eq:app-inner-derivative}
\end{align}

\subsection{The formal adjoints}

Take smooth test double forms supported in the affine chart.
The integral of the ordinary derivative
$\partial_i(\mu\langle\omega,\eta\rangle_g)$ is zero.
Using $\partial_i\mu=\alpha_i\mu$ and
\eqref{eq:app-inner-derivative}, we get
\begin{align*}
 0=\int
 \bigl(&\langle D_i\omega,\eta\rangle_g
       +\langle\omega,D_i\eta\rangle_g
       -2\langle\omega,S_i\eta\rangle_g
       +\alpha_i\langle\omega,\eta\rangle_g\bigr)
       \,\dd\vol_g.
\end{align*}
Solving for the first term gives the exact local formal adjoint
\begin{equation}\label{eq:app-D-adjoint}
 D_i^*=-D_i-\alpha_i+2S_i.
\end{equation}

\noindent Pointwise wedge multiplication has adjoints
\[
 e_i^*=g^{ij}i_j,\qquad
 \bar e_i^*=g^{ij}\bar i_j.
\]
It follows that, on input of bidegree $(p,q)$,
\[
 \bdelt=\sum_{i,j}D_i^*(g^{ij}\bar i_j).
\]
Here $g^{ij}\bar i_j$ first maps to bidegree $(p,q-1)$;
the $D_i^*$ that follows it acts on that
lower bidegree. Expanding the derivative and then setting
$g_{ij}(x_0)=\delta_{ij}$ gives
\begin{align*}
 \bdelt
 ={}&-\sum_{i,j}g^{ij}\bar i_jD_i
     -\sum_{i,j}(\partial_i g^{ij})\bar i_j
     -\sum_{i,j}\alpha_i g^{ij}\bar i_j
     +2\sum_{i,j}g^{ij}S_i\bar i_j\\
 ={}&-\sum_i\bar i_iD_i
     +2\sum_i\alpha_i\bar i_i
     -\sum_i\alpha_i\bar i_i
     +2\sum_iS_i\bar i_i
     \qquad\text{at }x_0.
\end{align*}

\noindent There are two cubic contributions in $\sum_iS_i\bar i_i$.
The one acting only in the second factor vanishes:
\[
 T:=\sum_{i,j,k}C_{ijk}\bar e_j\bar i_k\bar i_i=0.
\]
Indeed, exchanging the dummy indices $i,k$ leaves $C_{ijk}$
unchanged and replaces $\bar i_k\bar i_i$ by
$\bar i_i\bar i_k=-\bar i_k\bar i_i$, so $T=-T$.
The mixed contribution remains. We obtain
\begin{equation}\label{eq:app-bar-adjoint}
 \bdelt=-\sum_i\bar i_iD_i+\sum_i\alpha_i\bar i_i
          +2\sum_{i,j,k}C_{ijk}e_ji_k\bar i_i
          \qquad\text{at }x_0.
\end{equation}
Interchanging the two exterior factors gives
\begin{equation}\label{eq:app-first-adjoint}
 \delta=-\sum_i i_iD_i+\sum_i\alpha_i i_i
          +2\sum_{i,j,k}C_{ijk}\bar e_j\bar i_ki_i
          \qquad\text{at }x_0.
\end{equation}
For this second formula, $S_i$ acts after contraction on bidegree
$(p-1,q)$, and its pure first-factor contribution vanishes by
the same symmetry argument.

\subsection{Commuting contraction past the exterior derivatives}

We now compute $\Lambda\partial+\partial\Lambda$ at $x_0$.
The coefficients $g^{ij}$ in $\Lambda$ must again be
differentiated before evaluating there. Using
\eqref{eq:app-operators},
\begin{align*}
 \Lambda\partial+\partial\Lambda
 ={}&\sum_{i,j,k}g^{ij}
       (i_ie_k+e_ki_i)\bar i_jD_k
       +\sum_{i,j,k}(\partial_kg^{ij})e_ki_i\bar i_j\\
 ={}&\sum_i\bar i_iD_i
       -2\sum_{i,j,k}C_{kij}e_ki_i\bar i_j.
\end{align*}
In the first line, operators in different factors commute, and
$D_k$ commutes with contractions in the affine frame. The
exterior relation $i_ie_k+e_ki_i=\delta_{ik}\id$ gives the
first term in the second line. Total symmetry of $C$ and a
renaming of indices rewrite its cubic term as
\[
 -2\sum_{i,j,k}C_{ijk}e_ji_k\bar i_i.
\]
Comparison with \eqref{eq:app-bar-adjoint} therefore gives
\[
 \Lambda\partial+\partial\Lambda
 =-\bdelt+\sum_i\alpha_i\bar i_i
 =-\bdelt+\bar\iota(X_\alpha).
\]
At $x_0$, $X_\alpha=\sum_i\alpha_i\partial_i$, which explains
the last expression. The mixed cubic terms agree exactly and
cancel; only the trace $\alpha$ remains.

For the other factor, the same expansion gives
\begin{align*}
 \Lambda\bdel+\bdel\Lambda
 ={}&\sum_{i,j,k}g^{ij}i_i
       (\bar i_j\bar e_k+\bar e_k\bar i_j)D_k
       +\sum_{i,j,k}(\partial_kg^{ij})\bar e_ki_i\bar i_j\\
 ={}&\sum_i i_iD_i
       -2\sum_{i,j,k}C_{kij}\bar e_ki_i\bar i_j\\
 ={}&\sum_i i_iD_i
       -2\sum_{i,j,k}C_{ijk}\bar e_j\bar i_ki_i\\
 ={}&-\delta+\iota(X_\alpha),
\end{align*}
where cross-factor contractions commute and the final comparison
uses \eqref{eq:app-first-adjoint}. This proves both identities
in \eqref{eq:Kahler}. Since $x_0$ was arbitrary and the operators
are intrinsic, the identities hold on the entire Hessian manifold.

\end{document}